\newcommand{\R}{\mathbb R}
\newtheorem{thm}{Theorem}[section]
\newtheorem{lemma}[thm]{Lemma}
\newtheorem{proposition}[thm]{Proposition}
\newtheorem{cor}[thm]{Corollary}
\theoremstyle{remark}
\newtheorem*{rmk}{Remark}
\begin{document}


\title{A reverse Rogers-Shephard inequality for log-concave functions}

\author{David Alonso-Guti\'{e}rrez}
\email{alonsod@unizar.es}
\address{{\sc David Alonso}: \'Area de An\'alisis Matem\'atico, Departamento de Matem\'aticas, Facultad de Ciencias, Universidad de Zaragoza, Pedro Cerbuna 12, 50009 Zaragoza (Spain), IUMA}
\thanks{Partially suppored by MINECO Project MTM2016-77710-P}
\date{\today}
\begin{abstract}
We will prove a reverse Rogers-Shephard inequality for log-concave functions.
In some particular cases, the method used for general log-concave functions can be slightly improved, allowing us to prove volume estimates for polars of $\ell_p$-diferences of convex bodies whose polar bodies under some condition on the barycenter of their polar bodies.
\end{abstract}
\maketitle
\section{Introduction and notation}

A convex body is a subset $K\subseteq\R^n$ which is convex, compact, and has non-empty interior. The Minkowski sum of two convex bodies $K,L\subseteq\R^n$ is the convex body defined by
$$
K+L:=\{x+y\in\R^n:x\in K, y\in L\}.
$$
Brunn-Minkowski inequality states that for any two convex bodies $K,L\subseteq\R^n$, if $|\cdot|$ denotes the volume (Lebesgue measure) of a measurable set in $\R^n$,
$$
|K+L|^\frac{1}{n}\geq|K|^\frac{1}{n}+|L|^\frac{1}{n},
$$
with equality if and only if $K$ and $L$ are homothetic. As a consequence, for any convex body $K\subseteq\R^n$, the difference body $K-K:=K+(-K)$ verifies that
$$
|K-K|\geq 2^n|K|,
$$
with equality if and only if $K$ is centrally symmetric. A reverse inequality was proved by Rogers and Shephard (see \cite{RS}). Rogers-Shephard inequality states that for any convex body $K\subseteq\R^n$,
$$
|K-K|\leq{2n\choose n}|K|,
$$
with equality if and only if $K$ is a simplex. This inequality was extended to any pair of convex bodies in \cite{RS2}, where it was proved that for any pair of convex bodies  $K,L\subseteq\R^n$ and any $x_0\in\R^n$
\begin{equation}\label{eq:RSTwoBodies}
|K\cap (x_0+L)||K-L|\leq{2n\choose n}|K||L|,
\end{equation}
with equality if and only if $K=L$ is a simplex (see \cite{AJV} for the characterization of equality). A reverse inequality was proved by Milman and Pajor \cite{MP}. For any pair of convex bodies  $K,L\subseteq\R^n$ with the same,
\begin{equation}\label{eq:MilmanPajor}
|K||L|\leq |K-L||K\cap L|.
\end{equation}

For any convex body $K\subseteq\R^n$ such that $0\in\textrm{int} K$, its polar body is the convex body defined by
$$
K^\circ:=\{x\in\R^n:\langle x,y\rangle\leq 1\,,\,\forall y\in K\},
$$
where $\langle\cdot,\cdot\rangle$ denotes the usual scalar product in $\R^n$.

It was proved in \cite{AGJV} (see also \cite{AEFO}) that for any two convex bodies $K,L\subseteq\R^n$ with $0\in \textrm{int}K\cap\textrm{int}L$
\begin{equation}\label{eq:VolumePolars}
|(K\cap L)^\circ||(K-L)^\circ|\leq |K^\circ||L^\circ|.
\end{equation}
This inequality strengthens another inequality proved by Rogers and Shephard, which states that for any two convex bodies $K,L\subseteq\R^n$, containing the origin
\begin{equation}\label{eq:RSConvexHull}
|K\cap L||\textrm{conv}\{K,-L\}|\leq2^n|K||L|
\end{equation}
and allowed to prove that there is equality in \eqref{eq:RSConvexHull} if and only if $K=L$ is a simplex with the origin in one of its vertices.

In \cite{F}, Firey proved a dual Brunn-Minkowski theorem. As a particular case, one obtains the following volume inequality for the volume of the polar of the difference body of a convex body, which improves \eqref{eq:VolumePolars} when $L=K$. For any convex body $K$ with $0\in \textrm{int} K$,
\begin{equation}\label{eq:RSpolar1Body}
|(K-K)^\circ|\leq \frac{1}{2^n}|K^\circ|.
\end{equation}
The aforementioned dual Brunn-Minkowski theorem was extended to other querma\ss integrals in \cite{F2}. The dual Brunn-Minkowski inequality  for querma\ss integrals was extended in \cite{HY} for $\ell_p$-sums of convex bodies (see definition of the $\ell_p$-sum $K+_pL$ of $K$ and $L$ below). As a particular case, the authors proved the following volume inequality for the volume of the polar of the $p$-difference body of a convex body. For any convex body $K$ with $0\in \textrm{int} K$,
\begin{equation}\label{eq:BMDualpSums}
|(K+_p(-K))^\circ|\leq \frac{1}{2^\frac{n}{p}}|K^\circ|.
\end{equation}

Rogers-Shephard inequality \eqref{eq:RSTwoBodies} was extended to the setting of log-concave function in \cite{AGJV}.  A function $f:\R^n\to\R$ is said to be log-concave if $f(x)=e^{-v(x)}$ with $v:\R^n\to (-\infty,\infty]$ a convex function. Given two log-concave functions $f,g:\R^n\to\R$, their Asplund product is defined by
$$
f\star g(x)=\sup_{x_1+x_2=x}f(x_1)g(x_2)=\sup_{y\in\R^n}f(y)g(x-y).
$$
and their convolution is defined by
$$
f*g(x)=\int_{\R^n}f(y)g(x-y)dy.
$$
 Notice that if $f=\chi_K$ and $g=\chi_L$ are the characteristic functions of two convex bodies then $f\star g(x)=\chi_{K+L}(x)$ and $f*g(x)=|K\cap (x-L)|$. The functional version of inequality \eqref{eq:RSTwoBodies} states that for any two integrable log-concave functions $f,g:\R^n\to\R$
\begin{equation}\label{eq:RSFunctional}
 \Vert f*g\Vert_\infty\int_{\R^n}f\star g(x)dx\leq{2n\choose n}\Vert f\Vert_\infty\Vert g\Vert_\infty\int_{\R^n}f(x)dx\int_{\R^n}g(x)dx,
\end{equation}
 with equality if and only if $\frac{f(x)}{\Vert f\Vert_\infty}=\frac{g(-x)}{\Vert g\Vert_\infty}$ is the characteristic function of an $n$-dimensional simplex.

 The purpose of this paper is to give a reverse inequality  to \eqref{eq:RSFunctional} in the same spirit as \eqref{eq:MilmanPajor} reverses Rogers-Shephard inequality.  Given an integrable log-concave function the entropy of $f$ is defined by
 $$
 \textrm{Ent}(f)=-\frac{\int_{\R^n}f(x)\log f(x)dx}{\int_{\R^n}f(x)dx}.
 $$
 We will prove
 \begin{thm}\label{thm:MilmanPajorLogConcaveProduct}
Let $f,g:\R^n\to\R$ be two integrable log-concave functions with opposite barycenter and such that $\Vert f\Vert_\infty=f(0)$ and $\Vert g\Vert_\infty=g(0)$. Then
$$
\Vert f\Vert_\infty\Vert g\Vert_\infty\int_{\R^{n}}f(x)dx\int_{\R^n}g(y)dy\leq e^{\left(1+\textrm{Ent}\left(\frac{f}{\Vert f\Vert_\infty}\right)+\textrm{Ent}\left(\frac{g}{\Vert g\Vert_\infty}\right)\right)}f*g(0)\int_{\R^n}f\star g(z)dz.
$$
\end{thm}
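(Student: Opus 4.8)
The plan is to reduce the inequality, by two applications of Jensen's inequality, to an entropy estimate for $f*g$; the hypothesis on the barycenters is needed only to place the barycenter of $f*g$ at the origin. By the joint homogeneity of both sides under $f\mapsto\lambda f$, $g\mapsto\mu g$ ($\lambda,\mu>0$), and the hypothesis $\Vert f\Vert_\infty=f(0)$, $\Vert g\Vert_\infty=g(0)$, I may assume $\Vert f\Vert_\infty=f(0)=1=\Vert g\Vert_\infty=g(0)$; then $\textrm{Ent}(f/\Vert f\Vert_\infty)=\textrm{Ent}(f):=-\int_{\R^n}f\log f/\int_{\R^n}f\ge 0$ (and similarly for $g$), and the goal becomes $\int_{\R^n}f\,\int_{\R^n}g\le e^{1+\textrm{Ent}(f)+\textrm{Ent}(g)}(f*g)(0)\int_{\R^n}f\star g$. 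Put $h=f*g$, $\mu_f=f/\int_{\R^n}f$, $\mu_g=g/\int_{\R^n}g$, and $p=h/\int_{\R^n}h=\mu_f*\mu_g$. By Pr\'ekopa's theorem $h$ is log-concave, by Fubini $\int_{\R^n}h=\int_{\R^n}f\,\int_{\R^n}g$, and substituting $x=y+z$ in $\int_{\R^n}x\,h(x)\,dx$ produces $(\int_{\R^n}g)\int_{\R^n}y f(y)\,dy+(\int_{\R^n}f)\int_{\R^n}z g(z)\,dz$, which vanishes precisely because $f$ and $g$ have opposite barycenters. Hence $p$ is a centered log-concave probability density; writing $p=e^{-u}$ with $u$ convex, Jensen's inequality gives $\textrm{Ent}(p)=\int_{\R^n}u\,p\ge u(0)=-\log p(0)$, i.e. $p(0)\ge e^{-\textrm{Ent}(p)}$, equivalently
$$
\int_{\R^n}f\,\int_{\R^n}g\;\le\;(f*g)(0)\,e^{\textrm{Ent}(p)}.
$$

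Thus the theorem will follow from the entropy estimate
$$
\textrm{Ent}(p)\;\le\;\textrm{Ent}(f)+\textrm{Ent}(g)+\log\int_{\R^n}f\star g,
$$
which in fact has the constant $1$ in the final exponent to spare. This is the heart of the matter and I expect it to be the main obstacle. I would prove it by disintegration. For $s\in\R^n$ with $(f*g)(s)>0$, the function $x\mapsto q_s(x):=f(x)g(s-x)/(f*g)(s)$ is a log-concave probability density whose supremum is $(f\star g)(s)/(f*g)(s)$, by the very definition of $f\star g$; hence the trivial bound $-\int q_s\log q_s\ge-\log\Vert q_s\Vert_\infty$ reads
$$
-\int_{\R^n}q_s(x)\log q_s(x)\,dx\;\ge\;\log(f*g)(s)-\log(f\star g)(s).
$$
Now use the identity $\textrm{Ent}(p)=\textrm{Ent}(\mu_f)+\textrm{Ent}(\mu_g)-\int_{\R^n}p(s)\big(-\int_{\R^n}q_s\log q_s\big)\,ds$, the analytic form of $H(X+Y)=H(X)+H(Y)-H(X\mid X+Y)$ (conditional entropy chain rule plus independence) for independent $X\sim\mu_f$, $Y\sim\mu_g$. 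Inserting the pointwise bound (legitimate since $p\ge 0$), and using $p=\mu_f*\mu_g$ and $f\star g=(\int_{\R^n}f\,\int_{\R^n}g)\,\mu_f\star\mu_g$, the surviving integral reorganizes into $\log\int_{\R^n}\mu_f\star\mu_g$ minus the Kullback--Leibler divergence of $p$ from the probability density $(\mu_f\star\mu_g)/\int_{\R^n}\mu_f\star\mu_g$; since that divergence is nonnegative one is left with $\textrm{Ent}(p)\le\textrm{Ent}(\mu_f)+\textrm{Ent}(\mu_g)+\log\int_{\R^n}\mu_f\star\mu_g$, and rewriting $\textrm{Ent}(\mu_f)=\textrm{Ent}(f)+\log\int_{\R^n}f$ (and similarly for $g$) gives exactly the displayed estimate.

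The points that need care: that the barycenter computation uses only the hypothesis on the barycenters and nothing about where the maxima sit; that $\Vert f\star g\Vert_\infty=\Vert f\Vert_\infty\Vert g\Vert_\infty$, which under the normalization is attained at $0$ (this is where $\Vert f\Vert_\infty=f(0)$ and $\Vert g\Vert_\infty=g(0)$ enter, and it also guarantees $\int_{\R^n}f\star g<\infty$); the validity of the entropy identity and the finiteness of all the entropies involved (automatic for integrable log-concave functions); and that $\{f*g>0\}$ and $\{f\star g>0\}$ have the same convex support, so the Kullback--Leibler term is well defined and nonnegative. Every individual inequality used is soft (Jensen, and $-\int q\log q\ge-\log\Vert q\Vert_\infty$); the only delicate step is organizing the entropy identity and the final reorganization of terms.
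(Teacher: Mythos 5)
Your argument is correct, and it is genuinely different from the paper's proof. The paper lifts the problem to $\R^{2n+1}$: it takes $F(u,v)=f\bigl(\tfrac{u+v}{\sqrt2}\bigr)g\bigl(\tfrac{v-u}{\sqrt2}\bigr)$, the weighted epigraph $L=\{F(u,v)\geq e^{-t}\Vert f\Vert_\infty\Vert g\Vert_\infty\}$ with the measure $e^{-t}\,dt\,dv$, applies the Milman--Pajor lemma (Lemma \ref{lem:Log-concaveAndProbability}) to the slice-volume function $\psi(v,t)$ (log-concave by Brunn--Minkowski), computes the weighted barycenter via Lemma \ref{prop:Entropy} (this is where the opposite-barycenter hypothesis and the entropy terms enter), and then converts the resulting single sublevel set of $f(\cdot/\sqrt2)g(-\cdot/\sqrt2)$ into the integral $f*g(0)$ through the body $K_f$ (Lemma \ref{VolumeAndIntegral} and Proposition \ref{K_fAndLevelSets}); that last step is precisely where the hypotheses $\Vert f\Vert_\infty=f(0)$, $\Vert g\Vert_\infty=g(0)$ and the extra factor $e$ come in. You instead stay in $\R^n$: Jensen applied to the convex potential of the centered density $p=\mu_f*\mu_g$ at its barycenter gives $p(0)\geq e^{-\mathrm{Ent}(p)}$ (this plays the role of Lemma \ref{lem:Log-concaveAndProbability}), and the level-set/$K_f$ step is replaced by the entropy chain rule $h(X+Y)=h(X)+h(Y)-h(X\mid X+Y)$ together with the bounds $h(q_s)\geq-\log\Vert q_s\Vert_\infty$ and nonnegativity of the relative entropy (or, more simply, Jensen for $\log$ applied to $\int p\log\frac{f\star g}{f*g}$, which avoids any finiteness discussion for the Kullback--Leibler term). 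Your computation checks out, including the chain-rule identity (valid here since integrable full-dimensional log-concave densities have finite entropy and the needed Fubini splittings are absolutely convergent), and what it buys is strictly more: you obtain the constant $e^{\mathrm{Ent}(f/\Vert f\Vert_\infty)+\mathrm{Ent}(g/\Vert g\Vert_\infty)}$ rather than $e^{1+\mathrm{Ent}+\mathrm{Ent}}$, hence you recover Milman--Pajor \eqref{eq:MilmanPajor} exactly for characteristic functions instead of with an extra factor $e$, and in fact the hypothesis $\Vert f\Vert_\infty=f(0)$, $\Vert g\Vert_\infty=g(0)$ is never used in your proof (contrary to your closing remark: attainment of $\Vert f\star g\Vert_\infty$ at $0$ plays no role anywhere, and integrability of $f\star g$ follows from the exponential decay of integrable log-concave functions alone). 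Two small corrections of wording: what you need is only the inclusion $\{f*g>0\}\subseteq\{f\star g>0\}$ (the supports need not literally coincide), and degenerate cases ($\int f=0$ or $\int g=0$) should be dismissed trivially at the outset. What the paper's heavier geometric route buys, by contrast, is the level-set machinery reused in Section \ref{sec:ResultsEllpSums} for Theorems \ref{thm: RSPolar} and \ref{thm: RSPolarReverse}.
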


\begin{rmk}
Notice that if $f=\chi_K$ and $g=\chi_{-L}$, where $K$ and $L$ have the same barycenter we obtain
$$
|K||L|\leq e|K\cap L||K-L|,
$$
which is an inequality like \eqref{eq:MilmanPajor} with a slightly worse constant. If $f(x)=g(x)=e^{-h_K(x)}$ with $K$ a centered convex body we obtain
$$
|K^\circ|\leq e^{1+2n}|(K- K)^\circ|,
$$
with a constant of the order $c^n$, which is the right order as \eqref{eq:RSpolar1Body} shows. Nevertheless, for these particular cases we will obtain sharper constants.
\end{rmk}

Let us introduce some more notation. For any convex body $K$ with $0\in\textrm{int} K$ $h_K$ and $\Vert \cdot\Vert_K$ denote the support function and the Minkowski gauge associated to $K$, which are defined as
$$
h_K(x)=\max_{y\in K}\langle x,y\rangle,\hspace{1cm} \Vert x\Vert_K=\inf\{\lambda>0: x\in\lambda K\}.
$$
If $K$ is centrally symmetric $\Vert \cdot\Vert_K$ is a norm. Besides, for every $x\in\R^n$, $h_{K}(x)=\Vert x\Vert_{K^\circ}$ and $h_{K+L}(x)=h_K(x)+h_L(x)$ for any pair of convex bodies $K,L\subseteq\R^n$. The $\ell_p$-sum of the convex bodies $K$ and $L$, $K+_p L$, is the convex body defined by its support function
\begin{itemize}
\item$h_{K+_p L}(x):=\left(h_K^p(x)+h_L^p(x)\right)^\frac{1}{p}$
\item$h_{K+_\infty L}(x):=\max\{h_K(x),h_L(x)\}$.
\end{itemize}
Notice that $K+_1L=K+L$ and $K+_\infty L=\textrm{conv}\{K,L\}$. Besides, if $L=K$ we have that $K+_p K=2^{\frac{1}{p}}K$

The $\ell_p$-intersection of $K$ and $L$ is the convex body defined by
\begin{itemize}
\item$h_{K\cap_pL}(x):=\inf_{x_1+x_2=x}\left(h_{K}^p(x_1)+h_{L}^p(x_2)\right)^\frac{1}{p}$
\item$h_{K\cap_\infty L}(x):=\inf_{x_1+x_2=x}\max\{h_K(x_1),h_L(x_2)\}$.
\end{itemize}
Notice that $K\cap_1 L=K\cap L$ and $K\cap_\infty L=(K^\circ+L^\circ)^\circ$. Besides, if $L=K$ we have that $K\cap_p K=2^{\frac{1}{p}-1}K$

Following the idea of the proof of \eqref{eq:MilmanPajor} we will prove the following
\begin{thm}\label{thm: RSPolar}
Let $K, L\subseteq\R^n$ be convex bodies such that $K^\circ$ and $L^\circ$ have opposite barycenters. Then for any $p\geq 1$
$$
|(K\cap_p L)^\circ||(K+_p(-L))^\circ|\geq\frac{\Gamma\left(1+\frac{n}{p}\right)^2}{\Gamma\left(1+\frac{2n}{p}\right)}|K^\circ||L^\circ|.
$$
\end{thm}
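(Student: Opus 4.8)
The plan is to translate the statement into an inequality for log-concave functions and then run the proof of Theorem~\ref{thm:MilmanPajorLogConcaveProduct}, sharpening one estimate by exploiting homogeneity.

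First I would record the elementary identity that, for a convex body $M$ with $0\in\mathrm{int}\,M$ and $p\geq 1$, integration in polar coordinates followed by the change of variables $s=r^p$ gives
$$
\int_{\R^n}e^{-\Vert x\Vert_M^p}\,dx=\Gamma\left(1+\frac np\right)|M|.
$$
Since $\Vert x\Vert_{M^\circ}=h_M(x)$ and, for $p\geq1$, $t\mapsto t^p$ is convex and increasing on $[0,\infty)$, the functions
$$
f(x)=e^{-h_K(x)^p},\qquad g(x)=e^{-h_L(x)^p}
$$
are log-concave; applying the identity above to $M=K^\circ,\ L^\circ,\ (K\cap_p L)^\circ$ and $(K+_p(-L))^\circ$, together with $h_{K+_p(-L)}(x)^p=h_K(x)^p+h_L(-x)^p$ and $h_{K\cap_p L}(x)^p=\inf_{x_1+x_2=x}\left(h_K(x_1)^p+h_L(x_2)^p\right)$, yields
$$
\int_{\R^n}f=\Gamma\left(1+\frac np\right)|K^\circ|,\qquad \int_{\R^n}g=\Gamma\left(1+\frac np\right)|L^\circ|,
$$
$$
f*g(0)=\int_{\R^n}e^{-(h_K(y)^p+h_L(-y)^p)}\,dy=\Gamma\left(1+\frac np\right)\bigl|(K+_p(-L))^\circ\bigr|,
$$
$$
\int_{\R^n}(f\star g)(x)\,dx=\int_{\R^n}e^{-h_{K\cap_p L}(x)^p}\,dx=\Gamma\left(1+\frac np\right)\bigl|(K\cap_p L)^\circ\bigr|,
$$
the latter two because $(f\star g)(x)=e^{-h_{K\cap_p L}(x)^p}$ and $f*g(0)=\int_{\R^n}f(y)g(-y)\,dy$. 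Hence Theorem~\ref{thm: RSPolar} is equivalent to the functional inequality
$$
f*g(0)\int_{\R^n}(f\star g)(z)\,dz\ \geq\ \frac{\Gamma\left(1+\frac np\right)^2}{\Gamma\left(1+\frac{2n}p\right)}\,\int_{\R^n}f\,\int_{\R^n}g.
$$

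Second I would check that $f,g$ fall under the hypotheses of Theorem~\ref{thm:MilmanPajorLogConcaveProduct}. Because $h_K(0)=h_L(0)=0$ we have $f(0)=g(0)=1=\Vert f\Vert_\infty=\Vert g\Vert_\infty$. A computation in polar coordinates gives $\int_{\R^n}x\,f(x)\,dx=c_{n,p}\int_{S^{n-1}}\theta\,\Vert\theta\Vert_{K^\circ}^{-(n+1)}\,d\theta=(n+1)\,c_{n,p}\,|K^\circ|\,\mathrm{bar}(K^\circ)$ for a constant $c_{n,p}>0$ depending only on $n$ and $p$; dividing by $\int_{\R^n}f=\Gamma(1+\frac np)|K^\circ|$ shows that $\mathrm{bar}(f)$ equals a fixed positive multiple (the same for every body) of $\mathrm{bar}(K^\circ)$, and likewise $\mathrm{bar}(g)$ is the same multiple of $\mathrm{bar}(L^\circ)$. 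Therefore the hypothesis $\mathrm{bar}(K^\circ)=-\mathrm{bar}(L^\circ)$ forces $f$ and $g$ to have opposite barycenter, so Theorem~\ref{thm:MilmanPajorLogConcaveProduct} applies; using also $\mathrm{Ent}(f)=\mathrm{Ent}(g)=n/p$ (again by the same substitution), it yields the functional inequality above with $e^{-(1+2n/p)}$ in place of $\Gamma(1+\frac np)^2/\Gamma(1+\frac{2n}p)$. Since $e^{-(1+2n/p)}\leq\Gamma(1+\frac np)^2/\Gamma(1+\frac{2n}p)$ this only proves a weaker form of Theorem~\ref{thm: RSPolar}, and the point is to reach the larger constant.

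Finally, to reach the stated constant I would rerun the proof of Theorem~\ref{thm:MilmanPajorLogConcaveProduct} for these particular $f,g$, using that for $t\in(0,1]$ the superlevel sets $\{f\geq t\}$, $\{g\geq t\}$ and $\{f\star g\geq t\}$ are \emph{exactly} the dilates $(-\log t)^{1/p}K^\circ$, $(-\log t)^{1/p}L^\circ$ and $(-\log t)^{1/p}(K\cap_p L)^\circ$. Because of this exact homogeneous scaling, every volume that the general argument estimates by an entropy (equivalently AM--GM/Jensen) bound can here be evaluated exactly, through identities such as $\int_0^1(-\log t)^{n/p}\,dt=\Gamma(1+\frac np)$ and the Beta-type identity $\frac np\int_0^1 t^{n/p-1}(1-t)^{n/p}\,dt=\Gamma(1+\frac np)^2/\Gamma(1+\frac{2n}p)$, and combining these replaces the exponential constant by $\Gamma(1+\frac np)^2/\Gamma(1+\frac{2n}p)$. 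One convenient way to organize this: writing $f*g(z)=(f\star g)(z)\,w(z)$ with $w(z)=\int_{\R^n}e^{-(h_K(x)^p+h_L(z-x)^p-h_{K\cap_p L}(z)^p)}\,dx$ and noting $(f\star g)(0)=1$, hence $w(0)=f*g(0)$, the functional inequality is equivalent to the weighted estimate $\int_{\R^n}(f\star g)(z)\,w(z)\,dz\leq\frac{\Gamma(1+2n/p)}{\Gamma(1+n/p)^2}\,w(0)\int_{\R^n}(f\star g)(z)\,dz$. The main obstacle is exactly this last step: the width $w(z)$ is unbounded as $|z|\to\infty$, so the estimate is genuinely a weighted one and must be extracted by pushing the dilation structure of the level sets through the Milman--Pajor argument while keeping track of the barycenter normalization; without homogeneity it degrades to the exponential constant of Theorem~\ref{thm:MilmanPajorLogConcaveProduct}.
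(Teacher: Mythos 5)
Your reduction is sound: with $f(x)=e^{-h_K^p(x)}$, $g(y)=e^{-h_L^p(y)}$ the identities $\int f=\Gamma(1+\tfrac np)|K^\circ|$, $f*g(0)=\Gamma(1+\tfrac np)|(K+_p(-L))^\circ|$, $\int f\star g=\Gamma(1+\tfrac np)|(K\cap_p L)^\circ|$ are all correct, your barycenter and entropy computations are right, and you correctly observe that Theorem \ref{thm:MilmanPajorLogConcaveProduct} only yields the weaker constant $e^{-(1+2n/p)}$. This is in fact the same framework the paper uses. But the proposal stops exactly where the proof has to happen: you reformulate the target as the weighted estimate $\int(f\star g)(z)w(z)\,dz\leq\frac{\Gamma(1+2n/p)}{\Gamma(1+n/p)^2}\,w(0)\int(f\star g)(z)\,dz$ and then declare it ``the main obstacle,'' offering only the heuristic that homogeneity of the level sets should let the entropy bounds be ``evaluated exactly.'' That heuristic is not an argument, and the Beta identity you quote is not connected to any inequality. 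So there is a genuine gap: the step that produces the constant $\Gamma(1+\tfrac np)^2/\Gamma(1+\tfrac{2n}p)$ is missing.

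What the paper actually does at this point is run the Milman--Pajor lemma (Lemma \ref{lem:Log-concaveAndProbability}) \emph{separately on each level set} of the rotated product $F(u,v)=f\bigl(\tfrac{u+v}{\sqrt2}\bigr)g\bigl(\tfrac{v-u}{\sqrt2}\bigr)$, rather than on the whole epigraph with the measure $e^{-t}\,dt\,dx$ as in Theorem \ref{thm:MilmanPajorLogConcaveProduct}. For each $t\in(0,1]$ one needs the section function $\psi_t(v)=|L_t\cap((0,v)+H^\perp)|$ to have barycenter at the origin with respect to the uniform measure on the projection $M_t$; this is not the same statement as ``$f$ and $g$ have opposite barycenters'' (which is all you verified), and it is established by an explicit computation (Lemma \ref{lem:CenteredLevelSets}) showing that $\int_{K_t}x\,|\widetilde K_{t/f(x)}|\,dx$ is a fixed multiple of the barycenter of $K^\circ$ --- that is where your Beta identity actually enters --- so that the two contributions cancel under the hypothesis on $K^\circ,L^\circ$. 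The lemma then gives $|L_t|\leq|M_t|\,|L_t\cap H^\perp|$ for every $t$, and integrating in $t$ with Fubini yields $\Gamma(1+\tfrac np)^2|K^\circ||L^\circ|\leq\int_{\R^{2n}}\min\{f\star g(v),\,f(u)g(-u)\}\,du\,dv$. The constant then comes from the observation that this minimum equals $e^{-\Vert(u,v)\Vert^p_{(K\cap_pL)^\circ\times(K+_p(-L))^\circ}}$, whose integral over $\R^{2n}$ is $\Gamma(1+\tfrac{2n}p)|(K\cap_pL)^\circ||(K+_p(-L))^\circ|$ by Lemma \ref{lem:VolumeConvexBody} applied in dimension $2n$ --- not from any pointwise weighted bound on $w(z)$, which, as you note yourself, cannot hold pointwise since $w$ is unbounded. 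Without the level-set-wise application of the lemma, the cancellation lemma for the sections, and the $2n$-dimensional evaluation of the min-integral, the proposal does not reach the stated inequality.
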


In particular, taking $p=1$ we obtain the following reverse inequality to \eqref{eq:VolumePolars}.
\begin{cor}\label{cor: RSPolar}
Let $K, L\subseteq\R^n$ be convex bodies such that $K^\circ$ and $L^\circ$ have opposite barycenters. Then
$$
|(K\cap L)^\circ||(K-L)^\circ|\geq{2n\choose n}^{-1}|K^\circ||L^\circ|.
$$
In particular, taking $L=K$,
$$
|(K-K)^\circ|\geq{2n\choose n}^{-1}|K^\circ|.
$$
\end{cor}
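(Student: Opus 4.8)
The plan is to deduce Corollary~\ref{cor: RSPolar} directly from the case $p=1$ of Theorem~\ref{thm: RSPolar}. First I would specialise: since $K\cap_1 L=K\cap L$ and $K+_1(-L)=K+(-L)=K-L$, Theorem~\ref{thm: RSPolar} with $p=1$ reads
\[
|(K\cap L)^\circ|\,|(K-L)^\circ|\ \ge\ \frac{\Gamma(1+n)^2}{\Gamma(1+2n)}\,|K^\circ|\,|L^\circ|.
\]
It then remains only to identify the constant: $\Gamma(1+n)=n!$ and $\Gamma(1+2n)=(2n)!$, so $\frac{\Gamma(1+n)^2}{\Gamma(1+2n)}=\frac{(n!)^2}{(2n)!}={2n\choose n}^{-1}$, which is precisely the first displayed inequality of the corollary.

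Next I would obtain the second inequality by putting $L=K$. With this choice the hypothesis that $K^\circ$ and $L^\circ$ have opposite barycenters becomes $b(K^\circ)=-b(K^\circ)$, i.e.\ $K^\circ$ is centred, so the hypothesis of the first part is met; moreover $K\cap K=K$, whence $(K\cap K)^\circ=K^\circ$, and $K+(-K)=K-K$. Substituting into the first inequality gives $|K^\circ|\,|(K-K)^\circ|\ge{2n\choose n}^{-1}|K^\circ|^{2}$, and dividing by $|K^\circ|>0$ yields $|(K-K)^\circ|\ge{2n\choose n}^{-1}|K^\circ|$.

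For the corollary as stated there is no real obstacle: the only points to verify are the Gamma-function identity above and the fact — recorded in the excerpt — that $\cap_1$ and $+_1$ reduce to the usual intersection and Minkowski sum. The genuine difficulty sits one level up, in Theorem~\ref{thm: RSPolar}. Were that theorem not available, my route to the $p=1$ case would be to pass to the log-concave functions $f=e^{-h_K}$, $g=e^{-h_L}$, use $\int_{\R^n}e^{-h_M(x)}\,dx=n!\,|M^\circ|$ to rewrite $|K^\circ|,|L^\circ|,|(K-L)^\circ|,|(K\cap L)^\circ|$ as $\tfrac1{n!}\int f$, $\tfrac1{n!}\int g$, $\tfrac1{n!}(f*g)(0)$ and $\tfrac1{n!}\int f\star g$ respectively (the last two because $h_{K-L}=h_K+h_{-L}$ and because the Asplund product of $f$ and $g$ has exponent equal to the infimal convolution of $h_K$ and $h_L$, which is $h_{K\cap L}$), to note that the opposite-barycenter hypothesis on $K^\circ,L^\circ$ translates into $f$ and $g$ having opposite barycenters, hence $f*g$ being barycentred at the origin, and then to run a Rogers--Shephard / Milman--Pajor-type argument. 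The main obstacle on that route is extracting the \emph{sharp} constant ${2n\choose n}$ rather than the weaker $e^{1+2n}$ produced by the general functional inequality of Theorem~\ref{thm:MilmanPajorLogConcaveProduct}; this forces one to exploit the precise radial structure of $f$ and $g$ in the gauges of $K^\circ$ and $L^\circ$ instead of a generic entropy bound, in parallel with the role of the simplex as the extremiser in the classical Rogers--Shephard inequality.
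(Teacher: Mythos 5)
Your proposal is correct and matches the paper's route exactly: the corollary is obtained by specialising Theorem~\ref{thm: RSPolar} to $p=1$, using $K\cap_1 L=K\cap L$, $K+_1(-L)=K-L$, and $\Gamma(1+n)^2/\Gamma(1+2n)=(n!)^2/(2n)!={2n\choose n}^{-1}$, with the $L=K$ case following since the opposite-barycenter hypothesis then just says $K^\circ$ is centred. Your closing remarks about a direct functional proof are extra commentary, not needed for the corollary itself.
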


Taking $p=\infty$ in Theorem \ref{thm: RSPolar} and taking into account that  $(K+_\infty (-L))^\circ=(\textrm{conv}\{K, -L\})^\circ=K^\circ\cap (-L^\circ)$ and $(K\cap_\infty L)^\circ=K^\circ+L^\circ$, changing $K$ by $K^\circ$ and $L$ by $-L^\circ$ we recover inequality \eqref{eq:MilmanPajor}.

\begin{rmk}
In \cite{HY}, it was shown that an inequality like the one in Corollary \ref{cor: RSPolar} cannot be obtained. The reason is that no restriction on the barycenter of $K$ or $K^\circ$ was imposed and then the volume of $K^\circ$ can be arbitrarily large.
\end{rmk}


The following reverse inequality will also be proved

\begin{thm}\label{thm: RSPolarReverse}
Let $K, L\subseteq\R^n$ be convex bodies with $0\in\textrm{int}K\cap\textrm{int}L$. Then for any $p\geq 1$
$$
|(K\cap_p L)^\circ||(K+_p(-L))^\circ|\leq{2n\choose n}\frac{\Gamma\left(1+\frac{n}{p}\right)^2}{\Gamma\left(1+\frac{2n}{p}\right)}|K^\circ||L^\circ|.
$$
\end{thm}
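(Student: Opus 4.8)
\section*{Proof proposal for Theorem~\ref{thm: RSPolarReverse}}

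The plan is to reduce the inequality to the classical Rogers--Shephard inequality bounding the volume of a convex body by the product of the volume of a central section and the volume of the projection onto the complementary subspace, applied to a single auxiliary convex body in $\R^{2n}$ --- in the same spirit as the proof of Theorem~\ref{thm: RSPolar}, but replacing the Milman--Pajor-type inequality by a Rogers--Shephard-type one, so that no hypothesis on the barycenters is needed this time. Identify $\R^{2n}$ with $\R^n\times\R^n$, write $E_1=\R^n\times\{0\}$ and $E_2=\{0\}\times\R^n$ with orthogonal projections $P_1,P_2$, and consider the convex body $M\subseteq\R^{2n}$ whose Minkowski gauge is
$$
\|(x,y)\|_M:=\bigl(h_K(y)^p+h_L(x-y)^p\bigr)^{1/p}
$$
(with the usual modification $\|(x,y)\|_M=\max\{h_K(y),h_L(x-y)\}$ when $p=\infty$). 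One first checks that this is indeed the gauge of a convex body with $0$ in its interior: it is positively homogeneous of degree $1$; it is convex because it is the composition of the convex, coordinatewise nondecreasing map $(a,b)\mapsto(a^p+b^p)^{1/p}$ on $\R^2_{\ge 0}$ with the convex functions $(x,y)\mapsto h_K(y)$ and $(x,y)\mapsto h_L(x-y)$; and it is finite and strictly positive off the origin because $0\in\mathrm{int}\,K\cap\mathrm{int}\,L$ forces $h_K,h_L$ to be bounded on the Euclidean ball and bounded below by a multiple of the Euclidean norm.

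The next step is to identify the section, projection and volume of $M$ relevant to the statement. Since $\|(0,y)\|_M=(h_K(y)^p+h_{-L}(y)^p)^{1/p}=h_{K+_p(-L)}(y)=\|y\|_{(K+_p(-L))^\circ}$, the section satisfies $M\cap E_2=(K+_p(-L))^\circ$. Since, for every $x\in\R^n$,
$$
\|x\|_{P_1M}=\inf_{y\in\R^n}\|(x,y)\|_M=\inf_{x_1+x_2=x}\bigl(h_K(x_1)^p+h_L(x_2)^p\bigr)^{1/p}=h_{K\cap_p L}(x)=\|x\|_{(K\cap_p L)^\circ},
$$
the projection satisfies $P_1M=(K\cap_p L)^\circ$. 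Finally, using the identity $\int_{\R^d}e^{-\|z\|_B^p}\,dz=\Gamma(1+\tfrac dp)|B|$ valid for any convex body $B$ with $0\in\mathrm{int}\,B$, together with Fubini and the change of variables $w=x-y$ for fixed $y$,
$$
\Gamma\!\left(1+\tfrac{2n}{p}\right)|M|=\int_{\R^{2n}}e^{-\|(x,y)\|_M^p}\,dx\,dy=\left(\int_{\R^n}e^{-h_K(y)^p}dy\right)\!\left(\int_{\R^n}e^{-h_L(w)^p}dw\right)=\Gamma\!\left(1+\tfrac np\right)^2|K^\circ||L^\circ|,
$$
so that $|M|=\tfrac{\Gamma(1+n/p)^2}{\Gamma(1+2n/p)}|K^\circ||L^\circ|$; equivalently, up to the change of variables $w=x-y$, $M$ is the $\ell_p$-sum of $K^\circ$ and $L^\circ$ placed in orthogonal subspaces, whose volume is precisely the quantity above.

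To conclude, apply the Rogers--Shephard section/projection inequality (see \cite{RS}) to $M$ with respect to the $n$-dimensional subspace $E_2$ of $\R^{2n}$: since $0\in M\cap E_2$, so that $0\in P_1M$,
$$
|M\cap E_2|\,|P_1M|\le\binom{2n}{n}|M|.
$$
This is immediate from the fact that $t\mapsto|M\cap(t+E_2)|^{1/n}$ is concave on $P_1M$ by Brunn--Minkowski, hence dominated by the cone with the same value at $0$ and the same support, whose integral carries the factor $\binom{2n}{n}^{-1}$. Substituting the three identities of the previous paragraph gives exactly the asserted bound. The step requiring the most care is the very first one --- verifying that $\|\cdot\|_M$ is a bona fide gauge (the convexity, via the composition argument) and that the relevant section of $M$ passes through $0\in P_1M$, which is what makes the Rogers--Shephard inequality available with constant $\binom{2n}{n}$ --- while the rest is bookkeeping. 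Note that for $p=1$ the constant $\binom{2n}{n}\tfrac{\Gamma(1+n)^2}{\Gamma(1+2n)}$ equals $1$, so the argument simultaneously reproves \eqref{eq:VolumePolars}; the case $p=\infty$ is handled by the same computation, interpreting $e^{-t^\infty}$ as $\chi_{[0,1)}(t)$, or by continuity in $p$.
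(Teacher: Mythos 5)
Your argument is correct, and it is in essence the paper's own proof in compressed form: the paper applies the Rogers--Shephard projection--section inequality $|P_H C|\,|C\cap H^\perp|\le\binom{2n}{n}|C|$ to each level set $L_t$ of $F(u,v)=e^{-h_K^p\left(\frac{u+v}{\sqrt2}\right)-h_L^p\left(\frac{v-u}{\sqrt2}\right)}$ and integrates in $t$, whereas you observe that all these level sets are homothets of a single convex body (your $M$, up to a volume-preserving shear/rotation), apply Rogers--Shephard once to $M$, and read off the three volumes directly via the identity of Lemma \ref{lem:VolumeConvexBody}; this removes the integration over $t$ and the Asplund-product bookkeeping, which is a genuine (if modest) streamlining, and your identifications $M\cap E_2=(K+_p(-L))^\circ$, $P_1M=(K\cap_pL)^\circ$, $|M|=\frac{\Gamma(1+n/p)^2}{\Gamma(1+2n/p)}|K^\circ||L^\circ|$ all check out. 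Two small corrections: the justification of the projection--section inequality has the comparison reversed --- the concave function $x\mapsto|M\cap(x+E_2)|^{1/n}$ \emph{dominates} (is at least) the cone with value $|M\cap E_2|^{1/n}$ at $0$ vanishing on $\partial(P_1M)$, which is what yields $|M|\ge\binom{2n}{n}^{-1}|M\cap E_2|\,|P_1M|$; as you wrote it ("dominated by the cone") it would give the opposite inequality. Also, the projection--section inequality is the one from \cite{RS2} rather than \cite{RS}. Neither point affects the validity of the proof, since the inequality you actually invoke is correctly stated and classical.
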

In particular, taking $p=1$ we recover inequality \eqref{eq:VolumePolars} and taking $p=\infty$ we recover Rogers-Shephard  inequality \eqref{eq:RSTwoBodies}.
In order to prove Theorem \ref{thm: RSPolar}, we will work in the context of log-concave functions for some particular functions, for which we can use a slightly different approach than in the proof of Theorem \ref{thm:MilmanPajorLogConcaveProduct}, working with level sets of log-concave functions instead of epigraph of their logarithms.

For any log-concave function $f$, its polar function is defined by
$$
f^\circ(x):=\inf_{y\in\R^n}\left(\frac{e^{-\langle x,y\rangle}}{f(y)}\right)=e^{-\mathcal{L}(-\log f)(x)},
$$
where $\mathcal{L}$ denotes the Legendre transform
$$
\mathcal{L}(u)(x)=\sup_{y\in\R^n}\langle x,y\rangle-u(y).
$$
Since the Legendre transform of a convex function is convex, $f^\circ$ is convex. Besides, for any log-concave upper semi-continuous function $f^{\circ\circ}=f$. Notice that for any $1<p<\infty$ if $f(x)=e^{-\frac{1}{p}\Vert x\Vert_K^p}$ then $f^\circ(x)=e^{-\frac{1}{q}\Vert x\Vert_{K^\circ}^q}$, where $\frac{1}{p}+\frac{1}{q}=1$ and $\chi_K^\circ(x)=e^{-\Vert x\Vert_{K^\circ}}$. Besides, if $f$ and $g$ are two log-concave functions then
$$
(f\star g)^\circ=f^\circ g^\circ.
$$
\begin{rmk}
Considering $f(x)=e^{-\frac{1}{p}h_K^p(x)}$ and $g(y)=e^{-\frac{1}{p}h_K^p(y)}$ we see that, on the one hand,
\begin{eqnarray*}
f\star g(x)&=&\sup_{x_1+x_2=x}f(x_1)g(x_2)=\sup_{x_1+x_2=x}e^{-\frac{1}{p}(h_{K}^p(x_1)+h_L^p(x_2))}\cr
&=& e^{-\frac{1}{p}\left(\inf_{x_1+x_2=x}h_{K}^p(x_1)+h_{L}^p(x_2)\right)}= e^{-\frac{1}{p}h_{(K\cap_p L)}^p(x)}.\cr
\end{eqnarray*}
 and, on the other hand
$$
f\star g(x)=(f^\circ g^\circ)^\circ(x)=\left(e^{-\frac{1}{q}h_{K^\circ+_{q}L^\circ}^q}\right)^\circ(x)=e^{-\frac{1}{p}h_{(K^\circ+_{q}L^\circ)^\circ}^q(x)}.
$$
and then $K\cap_p L=(K^\circ+_{q}L^\circ)^\circ$.
\end{rmk}

The paper is organized as follows. In Section \ref{sec:TechnicalLemmas} we will give the proof of some technical lemmas that will be needed in order to prove the results. In Section \ref{sec:MilmanPajorFunctionsGeneral} we will prove Theorem \ref{thm:MilmanPajorLogConcaveProduct} and in Section \ref{sec:ResultsEllpSums} we will prove Theorems \ref{thm: RSPolar} and \ref{thm: RSPolarReverse}.

\section{Technical lemmas}\label{sec:TechnicalLemmas}

In this section we will collect the technical lemmas that we will use to prove our results. The following lemma can be found in \cite {MP} (see also \cite[Lemma 4.1.21]{AGM}) and is crucial in the proof of \eqref{eq:MilmanPajor}. Since the proof of our results heavily rely on it, we reproduce the proof here for the sake of completeness.

\begin{lemma}\label{lem:Log-concaveAndProbability}
Let $\mu$ be a probability measure on $\R^n$ and let $\psi:\R^n\to\R$ be a non-negative log-concave function with finite, positive integral. Then
$$
\int_{\R^n}\psi(x)d\mu(x)\leq\psi\left(\int_{\R^n}x\frac{\psi(x)}{\int_{\R^n} \psi(y)d\mu(y)}d\mu(x)\right).
$$
\end{lemma}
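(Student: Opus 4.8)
The plan is to reduce the statement to two applications of Jensen's inequality. Write $\psi=e^{-\phi}$ with $\phi\colon\R^n\to(-\infty,\infty]$ convex and lower semicontinuous, and abbreviate $I:=\int_{\R^n}\psi(y)\,d\mu(y)$, which by hypothesis lies in $(0,\infty)$. Since $\psi\geq 0$ and $I>0$, the prescription $d\nu(x):=\frac{\psi(x)}{I}\,d\mu(x)$ defines a probability measure on $\R^n$, and the point appearing on the right-hand side of the asserted inequality is exactly its barycenter $b:=\int_{\R^n}x\,d\nu(x)$. Hence the lemma is equivalent to the bound $\psi(b)\geq I$.

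For the first application we use that $\phi$ is convex: Jensen's inequality for the probability measure $\nu$ gives
\[
\phi(b)=\phi\Bigl(\int_{\R^n}x\,d\nu(x)\Bigr)\leq\int_{\R^n}\phi(x)\,d\nu(x),
\]
equivalently $\log\psi(b)\geq\int_{\R^n}\log\psi(x)\,d\nu(x)=\frac1I\int_{\R^n}\psi(x)\log\psi(x)\,d\mu(x)$. For the second we apply Jensen's inequality to the convex function $u(t)=t\log t$ on $[0,\infty)$ (with $u(0):=0$) and the probability measure $\mu$, evaluated at the non-negative function $\psi$:
\[
\int_{\R^n}\psi(x)\log\psi(x)\,d\mu(x)=\int_{\R^n}u(\psi(x))\,d\mu(x)\geq u\Bigl(\int_{\R^n}\psi(x)\,d\mu(x)\Bigr)=I\log I.
\]
Dividing by $I$ and chaining the two inequalities yields $\log\psi(b)\geq\log I$, i.e. $\psi(b)\geq I$, which is the claim.

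The computation is short, so the points that need attention are purely measure-theoretic: one must check that the barycenter $b$ exists, i.e. that $\int_{\R^n}|x|\,\psi(x)\,d\mu(x)<\infty$, which is implicit in the statement and automatic in all our applications (for instance when $\mu$ has compact support), and that the two averages $\int\log\psi\,d\nu$ and $\int\psi\log\psi\,d\mu$ are well defined in $(-\infty,\infty]$ — they are, since the integrand $t\log t$ is bounded below by $-1/e$, so the only failure mode would be an infinite value, which is harmless for the inequality. The one conceptual point is to recognize that the convexity of $t\mapsto t\log t$ is precisely what turns the entropy-type average $\frac1I\int\psi\log\psi\,d\mu$ into the lower bound $\log I$; granting that observation, the two Jensen steps combine immediately and there is no real obstacle.
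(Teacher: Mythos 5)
Your proof is correct and is essentially the paper's own argument: both proofs combine Jensen's inequality for the convex function $t\mapsto t\log t$ with respect to $\mu$ and Jensen's inequality for the concave function $\log\psi$ (equivalently the convex $\phi=-\log\psi$) with respect to the tilted probability measure $\frac{\psi}{\int\psi\,d\mu}\,d\mu$, chained in the same way. The only difference is the order of presentation and your added remarks on integrability, which do not change the substance.
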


\begin{proof}
The function  $f(t):(0,\infty)\to\R$ given by $f(t)=t\log t$ is convex. Thus, by Jensen's inequality,
$$
\int_{\R^n}\psi(x)\log\psi(x)d\mu(x)\geq\left(\int_{\R^n}\psi(x)d\mu(x)\right)\log\left(\int_{\R^n}\psi(x)d\mu(x)\right).
$$
Equivalently,
$$
\int_{\R^n}\log\psi(x)\frac{\psi(x)}{\int_{\R^n}\psi(y)d\mu(y)}d\mu(x)\geq\log\left(\int_{\R^n}\psi(x)d\mu(x)\right).
$$
Since $\log\psi$ is concave on $\{x\in\R^n:\psi(x)>0\}$ we have, using Jensen's inequality with respect to the probability measure $\frac{\psi}{\int_{\R^n}\psi(y)d\mu(y)}d\mu$, that
\begin{eqnarray*}
\log\left[\psi\left(\int_{\R^n}x\frac{\psi(x)}{\int_{\R^n}\psi(y)d\mu(y)}d\mu(x)\right)\right]&\geq&\int_{\R^n}\log\psi(x)\frac{\psi(x)}{\int_{\R^n}\psi(y)d\mu(y)}d\mu(x)\cr
&\geq&\log\left(\int_{\R^n}\psi(x)d\mu(x)\right),
\end{eqnarray*}
which is the assertion of the lemma.
\end{proof}

Given an integrable log-concave function $f$ with $f(0)>0$ and $p>0$, the set $K_p(f)$ is defined as
$$
K_p(f)=\left\{x\in\R^n\,:\,\int_0^\infty f(rx)r^{p-1}dr\geq\frac{f(0)}{p}\right\}.
$$
These bodies were first considered by K. Ball in \cite{B}, who also established their convexity. Notice that when $f=\chi_K$ is the characteristic function of a convex body, $\Vert x\Vert_{f}=\Vert x\Vert_K$. We will consider the particular case of $p=n$. Let $K_f$ be the unit ball of the norm given by
$$
\Vert x\Vert_{K_f}=\left(\frac{n}{f(0)}\int_0^\infty r^{n-1}f(rx)dr\right)^{-\frac{1}{n}}.
$$

The following well-known lemma relates the volume of $K_f$ and the integral of $f$.
\begin{lemma}\label{VolumeAndIntegral}
Let $f$ be an integrable log-concave function with $f(0)>0$. Then
$$
|K_f|=\frac{1}{f(0)}\int_{\R^n}f(x)dx.
$$
\end{lemma}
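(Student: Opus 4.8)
The plan is to compute the volume of $K_f$ by integrating in polar coordinates and recognizing the defining radial integral of $K_f$. First I would write, using the polar coordinate formula for the volume of a star body,
$$
|K_f|=\frac{1}{n}\int_{S^{n-1}}\|\theta\|_{K_f}^{-n}\,d\sigma(\theta),
$$
where $d\sigma$ is the (non-normalized) surface measure on $S^{n-1}$. By the very definition of the gauge $\|\cdot\|_{K_f}$, we have
$$
\|\theta\|_{K_f}^{-n}=\frac{n}{f(0)}\int_0^\infty r^{n-1}f(r\theta)\,dr,
$$
so substituting gives
$$
|K_f|=\frac{1}{n}\int_{S^{n-1}}\frac{n}{f(0)}\left(\int_0^\infty r^{n-1}f(r\theta)\,dr\right)d\sigma(\theta)
=\frac{1}{f(0)}\int_{S^{n-1}}\int_0^\infty r^{n-1}f(r\theta)\,dr\,d\sigma(\theta).
$$

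The key step is then to recognize the double integral as $\int_{\R^n}f(x)\,dx$ written in polar coordinates: for any integrable function $f$,
$$
\int_{\R^n}f(x)\,dx=\int_{S^{n-1}}\int_0^\infty r^{n-1}f(r\theta)\,dr\,d\sigma(\theta).
$$
Combining the two displays yields $|K_f|=\frac{1}{f(0)}\int_{\R^n}f(x)\,dx$, as claimed. I would also remark briefly that $K_f$ is well-defined as a convex body: by Lemma~\ref{lem:Log-concaveAndProbability} or directly by Ball's result cited above, $\|\cdot\|_{K_f}$ is indeed a norm (equivalently $K_p(f)$ with $p=n$ is convex), and since $f$ is integrable and log-concave with $f(0)>0$, the radial integrals $\int_0^\infty r^{n-1}f(r\theta)\,dr$ are finite and positive for every $\theta$, so $\|\theta\|_{K_f}\in(0,\infty)$.

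There is essentially no obstacle here: the statement is a direct consequence of Fubini/polar coordinates once the normalization of $\|\cdot\|_{K_f}$ is unwound, and the only point requiring a word of justification — the convexity of $K_f$ making ``volume'' literally meaningful — is quoted from \cite{B}. The mild care needed is just to keep the factor $\frac{n}{f(0)}$ and the $\frac1n$ from the polar-coordinate volume formula straight so they cancel correctly.
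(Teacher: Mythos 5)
Your proof is correct and follows essentially the same route as the paper: both integrate in polar coordinates, unwind the definition of $\Vert\cdot\Vert_{K_f}$ so that $\rho_{K_f}(\theta)^n=\frac{n}{f(0)}\int_0^\infty r^{n-1}f(r\theta)\,dr$, and recognize the resulting iterated integral as $\frac{1}{f(0)}\int_{\R^n}f(x)\,dx$, the only difference being the normalization convention for the spherical measure.
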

\begin{proof}
Integrating in polar coordinates we have that
\begin{eqnarray*}
|K_f|&=&\int_{K_f}dx=n|B_2^n|\int_{S^{n-1}}\int_0^{\rho_{K_f}(u)}r^{n-1}drd\sigma(u)\cr
&=&|B_2^n|\int_{S^{n-1}}\rho_{K_f}(u)^nd\sigma(u)\cr
&=&|B_2^n|\int_{S^{n-1}}\frac{n}{f(0)}\int_0^\infty r^{n-1}f(rx)drd\sigma(u)\cr
&=&\frac{1}{f(0)}\int_{\R^n}f(x)dx.
\end{eqnarray*}
\end{proof}

The following proposition gives an inclusion between the level sets of a log-concave function and its associated convex body $K_f$. We will denote, for any $t\in (0,1]$
$$
K_t=\{x\in\R^n: f(x)\geq t\Vert f\Vert_\infty\}.
$$
\begin{proposition}\label{K_fAndLevelSets}
Let $f\,:\,\R^n\to\R$ be an integrable log-concave function with $\Vert f\Vert_\infty=f(0)$. Then, for any $t\in(0,1]$ we have that
$$
t^\frac{1}{n}K_t\subseteq K_f.
$$
\end{proposition}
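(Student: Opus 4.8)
The plan is to unwind the definition of $K_f$ into a one-dimensional integral inequality and then exploit log-concavity of $f$ along rays through the origin. Since $f$ is log-concave it is nonnegative, and by definition of the gauge of $K_f$ a point $x\in\R^n$ lies in $K_f$ precisely when
$$
\int_0^\infty r^{n-1}f(rx)\,dr\geq\frac{f(0)}{n}.
$$
So, fixing $t\in(0,1]$, it suffices to show that for every $y\in K_t$, i.e. every $y$ with $f(y)\geq t\Vert f\Vert_\infty=tf(0)$, the scaled point $x=t^{\frac1n}y$ satisfies this inequality; note that $\Vert f\Vert_\infty=f(0)$ guarantees $0\in K_t$, so $K_t$ is a convex body containing the origin and the segment $[0,y]$ lies inside it.

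First I would perform the change of variables $u=t^{\frac1n}r$ in $\int_0^\infty r^{n-1}f(rt^{\frac1n}y)\,dr$, which turns it into $t^{-1}\int_0^\infty u^{n-1}f(uy)\,du$. Hence $t^{\frac1n}y\in K_f$ is equivalent to
$$
\int_0^\infty u^{n-1}f(uy)\,du\geq\frac{tf(0)}{n}.
$$
Next I would discard the tail $\int_1^\infty$, which is nonnegative since $f\geq 0$, and estimate the integral over $[0,1]$ using log-concavity along $[0,y]$: for $u\in[0,1]$,
$$
f(uy)=f\bigl(uy+(1-u)\,0\bigr)\geq f(y)^u f(0)^{1-u}\geq (tf(0))^u f(0)^{1-u}=t^u f(0)\geq tf(0),
$$
the last step because $t\leq 1$ and $u\leq 1$ force $t^u\geq t$. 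Integrating,
$$
\int_0^1 u^{n-1}f(uy)\,du\geq tf(0)\int_0^1 u^{n-1}\,du=\frac{tf(0)}{n},
$$
which is exactly the required bound, so $t^{\frac1n}y\in K_f$; since $y\in K_t$ was arbitrary, $t^{\frac1n}K_t\subseteq K_f$.

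I do not expect a serious obstacle here: the argument is elementary once one spots the scaling $u=t^{\frac1n}r$ that converts the defining integral of $K_f$ at the scaled point into the same integral at $y$ up to the factor $t^{-1}$. The only points that need a little care are the direction of the estimate $t^u\geq t$ for $u\in[0,1]$ and $t\in(0,1]$, and the use of $\Vert f\Vert_\infty=f(0)$ to ensure $0\in K_t$ so that the log-concavity estimate along the segment $[0,y]$ is legitimate; finiteness of the integrals is irrelevant since only a lower bound is needed.
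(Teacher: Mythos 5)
Your proof is correct and is essentially the paper's argument in a different parametrization: the paper bounds $\rho_{K_f}(u)^n\geq t\rho_{K_t}(u)^n$ by truncating the ray integral at $\rho_{K_t}(u)$ and using $f\geq t\Vert f\Vert_\infty$ on that segment, which is exactly what your change of variables plus the two-point log-concavity bound $f(uy)\geq f(y)^u f(0)^{1-u}\geq tf(0)$ on $[0,1]$ accomplishes. No gap; the approaches coincide in substance.
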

\begin{proof}
Let $t\in(0,1]$. Then for any $u\in S^{n-1}$
\begin{eqnarray*}
\rho_{K_f}(u)^n&=&\frac{n}{\Vert f\Vert_\infty}\int_0^\infty r^{n-1}f(ru)dr\cr
&\geq&\frac{n}{\Vert f\Vert_\infty}\int_0^{\rho_{K_t}(u)} r^{n-1}f(ru)dr\cr
&\geq&nt\int_0^{\rho_{K_t}(u)} r^{n-1}dr\cr
&=&t\rho_{K_t}(u)^n.
\end{eqnarray*}
Consequently
$$
\rho_{K_f}(u)\geq t^\frac{1}{n}\rho_{K_t}(u)
$$
and so
$$
K_f\supseteq t^\frac{1}{n}K_t.
$$
\end{proof}

Given an integrable log-concave function $f$ we compute the barycenter of the epigraph of the convex function $-\log \frac{f}{\Vert f\Vert_\infty}$ with respect to the measure with density $e^{-t}$.
\begin{lemma}\label{prop:Entropy}
Let $f:\R^n\to\R$ be an integrable log-concave function and let
$$
L:=\{(x,t)\in\R^n\times[0,\infty)\,:\,f(x)\geq e^{-t}\Vert f\Vert_{\infty}\}.
$$
Then
$$
\frac{\int_{L}xe^{-t}dtdx}{\int_{L}e^{-t}dtdx}=\frac{\int_{\R^n}x\frac{f(x)}{\Vert f\Vert_\infty}dx}{\int_{\R^n}\frac{f(x)}{\Vert f\Vert_\infty}dx}.
$$
$$
\frac{\int_{L}te^{-t}dtdx}{\int_{L}e^{-t}dtdx}=1+\textrm{Ent}\left(\frac{f}{\Vert f\Vert_\infty}\right).
$$
\end{lemma}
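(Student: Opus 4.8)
The plan is to reduce both identities to one–dimensional integration in the variable $t$ via Fubini/Tonelli. Set
$$
v(x):=-\log\left(\frac{f(x)}{\Vert f\Vert_\infty}\right)\in[0,\infty],
$$
which is a non-negative convex function (with the convention $v(x)=\infty$ where $f(x)=0$), since $-\log f$ is convex and $f(x)\leq\Vert f\Vert_\infty$. The condition $f(x)\geq e^{-t}\Vert f\Vert_\infty$ is equivalent to $t\geq v(x)$, so $L$ is exactly the epigraph $\{(x,t):x\in\R^n,\ t\geq v(x)\}$ (the restriction $t\geq 0$ being automatic because $v\geq 0$). Integrating first in $t$ and using $\int_{v(x)}^\infty e^{-t}\,dt=e^{-v(x)}=\frac{f(x)}{\Vert f\Vert_\infty}$, I would obtain at once
$$
\int_{L}e^{-t}\,dt\,dx=\int_{\R^n}\frac{f(x)}{\Vert f\Vert_\infty}\,dx,\qquad \int_{L}x\,e^{-t}\,dt\,dx=\int_{\R^n}x\,\frac{f(x)}{\Vert f\Vert_\infty}\,dx,
$$
and dividing one expression by the other yields the first claimed identity.

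For the second identity I would use the elementary fact that $\int_a^\infty t\,e^{-t}\,dt=(a+1)e^{-a}$ for $a\geq 0$, so that, again integrating first in $t$,
$$
\int_{L}t\,e^{-t}\,dt\,dx=\int_{\R^n}\bigl(v(x)+1\bigr)e^{-v(x)}\,dx=\int_{\R^n}v(x)\,\frac{f(x)}{\Vert f\Vert_\infty}\,dx+\int_{\R^n}\frac{f(x)}{\Vert f\Vert_\infty}\,dx.
$$
Since $v(x)\,\frac{f(x)}{\Vert f\Vert_\infty}=-\frac{f(x)}{\Vert f\Vert_\infty}\log\frac{f(x)}{\Vert f\Vert_\infty}$, the first term on the right equals $\textrm{Ent}\left(\frac{f}{\Vert f\Vert_\infty}\right)\int_{\R^n}\frac{f(x)}{\Vert f\Vert_\infty}\,dx$ by the definition of the entropy. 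Dividing by $\int_{L}e^{-t}\,dt\,dx=\int_{\R^n}\frac{f(x)}{\Vert f\Vert_\infty}\,dx$ gives $1+\textrm{Ent}\left(\frac{f}{\Vert f\Vert_\infty}\right)$, which is the second assertion.

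The computation is routine, and there is no real obstacle; the only points deserving a word of care are the finiteness of the integrals and the applicability of Tonelli's theorem. As $f$ is integrable and log-concave it is bounded and decays exponentially away from a maximizer, so $\int_{\R^n}|x|f(x)\,dx<\infty$ (finiteness of the barycenter) and $\int_{\R^n}f\,|\log f|<\infty$ (finiteness of the entropy); on the set where $f(x)=0$ one has $v(x)=\infty$, which contributes nothing to any of the integrals, consistently with $\frac{f(x)}{\Vert f\Vert_\infty}=0$ there. With these standard facts, Tonelli's theorem justifies the interchange of the order of integration in each case, completing the proof.
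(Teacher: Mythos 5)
Your proof is correct and is essentially the paper's argument: both reduce the three integrals over $L$ to integrals over $\R^n$ by Fubini, the only difference being that you integrate in $t$ first (evaluating $\int_{v(x)}^\infty e^{-t}\,dt$ and $\int_{v(x)}^\infty t e^{-t}\,dt$ in closed form), whereas the paper slices by $t$, substitutes $s=e^{-t}$, and then swaps to the layer-cake form $\int_{\R^n}\int_0^{f(x)/\Vert f\Vert_\infty}\,ds\,dx$. Your remarks on the epigraph identification, the null contribution of $\{f=0\}$, and the integrability needed for Tonelli are accurate and complete.
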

\begin{proof}
Notice that
\begin{eqnarray*}
\int_{L}e^{-t}dtdx&=&\int_0^\infty e^{-t}|\{x\in\R^n: f(x)\geq e^{-t}\Vert f\Vert_\infty\}|dt\cr
&=&\int_0^1 |\{x\in\R^n: f(x)\geq s\Vert f\Vert_\infty\}|ds\cr
&=&\int_{\R^n}\int_0^{\frac{f(x)}{\Vert f\Vert_\infty}}dsdx=\int_{\R^n}\frac{f(x)}{\Vert f\Vert_\infty}dx.
\end{eqnarray*}
Similarly
\begin{eqnarray*}
\int_{L}xe^{-t}dtdx&=&\int_0^\infty e^{-t}\int_{\{x\in\R^n: f(x)\geq e^{-t}\Vert f\Vert_\infty\}}xdxdt\cr
&=&\int_0^1 \int_{\{x\in\R^n: f(x)\geq s\Vert f\Vert_\infty\}}xdxds\cr
&=&\int_{\R^n}x\int_0^{\frac{f(x)}{\Vert f\Vert_\infty}}dsdx=\int_{\R^n}x\frac{f(x)}{\Vert f\Vert_\infty}dx,
\end{eqnarray*}
and
\begin{eqnarray*}
\int_{L}te^{-t}dtdx&=&\int_0^\infty te^{-t}|\{x\in\R^n: f(x)\geq e^{-t}\Vert f\Vert_\infty\}|dt\cr
&=&\int_0^1 (-\log s)|\{x\in\R^n: f(x)\geq s\Vert f\Vert_\infty\}|ds\cr
&=&\int_{\R^n}\int_0^{\frac{f(x)}{\Vert f\Vert_\infty}}(-\log s)dsdx\cr
&=&\int_{\R^n}\frac{f(x)}{\Vert f\Vert_\infty}dx-\int_{\R^n}\frac{f(x)}{\Vert f\Vert_\infty}\log\frac{f(x)}{\Vert f\Vert_\infty}dx.
\end{eqnarray*}
\end{proof}

\begin{lemma}\label{lem:VolumeConvexBody}
Let $K\subseteq\R^n$ be a convex body such that $0\in\textrm{int} K$ and $p\geq 1$. Then
$$
\int_{\R^n}e^{-h_{K}^p(x)}dx= \Gamma\left(1+\frac{n}{p}\right)|K^\circ|.
$$
\end{lemma}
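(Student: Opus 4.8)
The plan is to pass to polar coordinates, reducing the $n$-dimensional integral to a one-dimensional $\Gamma$-integral times a radial integral that reproduces the volume of $K^\circ$. The starting point is the identity $h_K(x)=\Vert x\Vert_{K^\circ}$ recalled in the introduction, so that $h_K^p(x)=\Vert x\Vert_{K^\circ}^p$, together with the fact that $\Vert\cdot\Vert_{K^\circ}$ is positively homogeneous of degree one. Writing $x=ru$ with $r>0$ and $u\in S^{n-1}$, and letting $d\sigma$ denote the normalized surface measure on $S^{n-1}$ exactly as in the proof of Lemma~\ref{VolumeAndIntegral}, we obtain
$$
\int_{\R^n}e^{-h_K^p(x)}dx=n|B_2^n|\int_{S^{n-1}}\int_0^\infty e^{-r^p\Vert u\Vert_{K^\circ}^p}r^{n-1}\,dr\,d\sigma(u).
$$

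Next I would evaluate the inner integral. Performing, for fixed $u$, the change of variables $s=r^p\Vert u\Vert_{K^\circ}^p$ (equivalently $r=s^{1/p}\Vert u\Vert_{K^\circ}^{-1}$) turns it into
$$
\int_0^\infty e^{-r^p\Vert u\Vert_{K^\circ}^p}r^{n-1}\,dr=\frac{1}{p\Vert u\Vert_{K^\circ}^n}\int_0^\infty e^{-s}s^{\frac np-1}\,ds=\frac{\Gamma\left(\frac np\right)}{p}\,\rho_{K^\circ}(u)^n,
$$
where $\rho_{K^\circ}(u)=\Vert u\Vert_{K^\circ}^{-1}$ is the radial function of $K^\circ$. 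Substituting this back and recognizing the remaining integral as the polar-coordinate expression for the volume, namely $|K^\circ|=|B_2^n|\int_{S^{n-1}}\rho_{K^\circ}(u)^n\,d\sigma(u)$, gives
$$
\int_{\R^n}e^{-h_K^p(x)}dx=\frac{n\,\Gamma\left(\frac np\right)}{p}\,|K^\circ|=\Gamma\left(1+\frac np\right)|K^\circ|,
$$
where in the last step we used the functional equation $\frac np\,\Gamma\left(\frac np\right)=\Gamma\left(1+\frac np\right)$.

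There is essentially no obstacle here; the only points requiring a little care are of a technical nature: $h_K$ is finite everywhere because $K$ is bounded, and $\Vert u\Vert_{K^\circ}>0$ for $u\neq 0$ because $0\in\textrm{int}K$ forces $K^\circ$ to be bounded, so all the integrals above converge and the change of variables is legitimate. In fact the hypothesis $p\geq 1$ is not needed for the computation, only $p>0$, which together with $n\geq 1$ guarantees that $\Gamma\left(\frac np\right)$ is finite.
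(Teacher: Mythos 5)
Your proof is correct, but it follows a different route from the paper's. You integrate in polar coordinates: using $h_K(x)=\Vert x\Vert_{K^\circ}$ and homogeneity, the radial integral $\int_0^\infty e^{-r^p\Vert u\Vert_{K^\circ}^p}r^{n-1}dr=\frac{\Gamma(n/p)}{p}\rho_{K^\circ}(u)^n$ is evaluated by the substitution $s=r^p\Vert u\Vert_{K^\circ}^p$, and the spherical integral is then recognized as $|K^\circ|$, exactly mirroring the polar-coordinate computation in Lemma~\ref{VolumeAndIntegral}; your bookkeeping with the normalized measure $\sigma$ and the factor $n|B_2^n|$ is consistent with that lemma, and the final identity $\frac{n}{p}\Gamma\left(\frac{n}{p}\right)=\Gamma\left(1+\frac{n}{p}\right)$ is right. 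The paper instead uses a layer-cake (Fubini) argument: it writes $e^{-h_K^p(x)}=\int_{h_K^p(x)}^\infty e^{-t}dt$, exchanges the order of integration, and uses that the sublevel set $\{x:h_K^p(x)\leq t\}$ is the dilate $t^{1/p}K^\circ$, so the integral becomes $|K^\circ|\int_0^\infty t^{n/p}e^{-t}dt$. The two computations are equally elementary; the paper's version has the small advantage of making explicit the level-set identity $\{e^{-h_K^p}\geq e^{-t}\}=t^{1/p}K^\circ$, which is reused in Lemma~\ref{lem:CenteredLevelSets} and in the proof of Theorem~\ref{thm: RSPolar}, while yours decomposes radially and parallels Lemma~\ref{VolumeAndIntegral}. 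Your closing observations are also accurate: only $p>0$ is needed for the identity (convexity of $h_K^p$ plays no role here), and $0\in\textrm{int}\,K$ guarantees $\Vert u\Vert_{K^\circ}>0$ for $u\neq 0$, so all integrals converge.
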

\begin{proof}
\begin{eqnarray*}
\int_{\R^n}e^{-h_{K}^p(x)}dx&=&\int_{\R^n}\int_{h_{K}^p(x)}^\infty e^{-t}dtdx=\int_0^\infty |t^\frac{1}{p}K^\circ|e^{-t}|dt\cr
&=&|K^\circ|\int_0^\infty t^{\frac{n}{p}}e^{-t}dt=\Gamma\left(1+\frac{n}{p}\right)|K^\circ|.
\end{eqnarray*}
\end{proof}

\begin{lemma}\label{lem:CenteredLevelSets}
Let $p\geq 1$ and let $K,L\subseteq\R^n$ be convex bodies with $0\in\textrm{int}(K\cap L)$ such that $K^\circ$ and $L^\circ$ have opposite barycenter. Define $f(x)=e^{-h_{K}^p(x)}, g(y)=e^{-h_L^p(y)}$. Then for every $t\in (0,1]$ the level sets
$$
K_t:=\{x\in\R^{n}: f(x)\geq t\}=(-\log t)^\frac{1}{p}K^\circ,
$$
$$
\widetilde{K}_t:=\{y\in\R^{n}: g(y)\geq t\}=(-\log t)^\frac{1}{p}L^\circ,
$$
and
$$
\frac{1}{|\widetilde{L}_t|}\int_{K_t}x\left|\widetilde{K}_{\frac{t}{f(x)}}\right|dx+
\frac{1}{|\widetilde{L}_t|}\int_{\widetilde{K}_t}y\left|\widetilde{K}_{\frac{t}{g(y)}}\right|dy=0,
$$
where $\widetilde{L}_t=\{(x,y)\in\R^{2n}:f(x)g(-y)\geq t\}$.
\end{lemma}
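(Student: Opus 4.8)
The plan is as follows. Throughout write $s:=-\log t\ge 0$. The first two identities are immediate from $h_K(x)=\Vert x\Vert_{K^\circ}$: for $t\in(0,1]$ one has $f(x)=e^{-h_K^p(x)}\geq t$ precisely when $\Vert x\Vert_{K^\circ}\leq s^{1/p}$, i.e.\ $x\in s^{1/p}K^\circ$, and the same argument with $L$ in place of $K$ gives $\widetilde K_t=s^{1/p}L^\circ$. Consequently, if $x\in K_t$ then $t/f(x)=e^{\Vert x\Vert_{K^\circ}^p-s}\le 1$, so $\widetilde K_{t/f(x)}$ is the genuine scaled body $(s-\Vert x\Vert_{K^\circ}^p)^{1/p}L^\circ$, of volume $(s-\Vert x\Vert_{K^\circ}^p)^{n/p}|L^\circ|$; symmetrically $K_{t/g(y)}=(s-\Vert y\Vert_{L^\circ}^p)^{1/p}K^\circ$ when $y\in\widetilde K_t$.

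For the last identity I would first recognise both integrals as marginals of the $2n$-dimensional level set $\widetilde L_t$. Since $g\le 1$, the map $x\mapsto|\{y:f(x)g(-y)\geq t\}|$ is supported on $K_t$ and equals $|\widetilde K_{t/f(x)}|$ there (the fibre $\{y:f(x)g(-y)\ge t\}$ being $-\widetilde K_{t/f(x)}$, and volume being reflection invariant), so Fubini gives
\[
\int_{K_t}x\,\bigl|\widetilde K_{t/f(x)}\bigr|\,dx=\int_{\R^n}x\,\bigl|\{y:f(x)g(-y)\geq t\}\bigr|\,dx=\int_{\widetilde L_t}x\,dx\,dy,
\]
and, after the change of variables $y\mapsto-y$,
\[
\int_{\widetilde K_t}y\,\bigl|K_{t/g(y)}\bigr|\,dy=-\int_{\widetilde L_t}y\,dx\,dy.
\]
Hence the left-hand side of the claimed identity equals $\frac{1}{|\widetilde L_t|}\int_{\widetilde L_t}(x-y)\,dx\,dy$, so (for $t\in(0,1)$; the case $t=1$ is trivial) it suffices to prove $\int_{\widetilde L_t}(x-y)\,dx\,dy=0$, i.e.\ that the barycenter of $\widetilde L_t$ lies on the diagonal $\{(z,z):z\in\R^n\}$.

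Next I would compute the two marginals. Slicing $\widetilde L_t$ over the first coordinate and using the formulas of the first paragraph,
\[
\int_{\widetilde L_t}x\,dx\,dy=|L^\circ|\int_{s^{1/p}K^\circ}x\,(s-\Vert x\Vert_{K^\circ}^p)^{n/p}\,dx=s^{\frac{2n+1}{p}}\,|L^\circ|\int_{K^\circ}z\,(1-\Vert z\Vert_{K^\circ}^p)^{n/p}\,dz,
\]
the last equality by the dilation $x=s^{1/p}z$. The key elementary point is that the weight $(1-\Vert z\Vert_{K^\circ}^p)^{n/p}$ is radial, so passing to polar coordinates the radial and spherical integrations decouple and
\[
\int_{K^\circ}z\,(1-\Vert z\Vert_{K^\circ}^p)^{n/p}\,dz=c_{n,p}\int_{K^\circ}z\,dz,\qquad c_{n,p}:=(n+1)\int_0^1\tau^n(1-\tau^p)^{n/p}\,d\tau>0;
\]
in words, a radial reweighting only rescales the barycenter of $K^\circ$ and does not reorient it. The same computation on the second coordinate (carrying along the reflection built into $\widetilde L_t$) gives $\int_{\widetilde L_t}y\,dx\,dy=-c_{n,p}\,s^{\frac{2n+1}{p}}\,|K^\circ|\int_{L^\circ}w\,dw$, so that
\[
\int_{\widetilde L_t}(x-y)\,dx\,dy=c_{n,p}\,s^{\frac{2n+1}{p}}\left(|L^\circ|\int_{K^\circ}z\,dz+|K^\circ|\int_{L^\circ}w\,dw\right),
\]
which vanishes because $K^\circ$ and $L^\circ$ have opposite barycenters, i.e.\ $\frac{1}{|K^\circ|}\int_{K^\circ}z\,dz=-\frac{1}{|L^\circ|}\int_{L^\circ}w\,dw$. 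This proves the identity.

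I do not expect a genuine obstacle: the lemma is essentially a bookkeeping computation. The two places that need care are the reflection $y\mapsto-y$ hidden in the definition of $\widetilde L_t$ --- it is exactly this that makes the correct hypothesis ``opposite barycenters'' rather than ``equal barycenters'' --- and the normalisations in the one-dimensional polar-coordinate integrals, so that the radial weights contribute only the universal constant $c_{n,p}$ and drop out of the final vanishing.
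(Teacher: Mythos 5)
Your proof is correct and takes essentially the same route as the paper: both arguments boil down to the observation that integrating $z$ against a weight depending only on $\Vert z\Vert_{K^\circ}$ over $K^\circ$ produces a positive multiple of $\int_{K^\circ}z\,dz$ (you obtain this via polar coordinates normalized by the radial function, the paper via a layer-cake identity yielding Beta-function constants), after which the opposite-barycenter hypothesis makes the two contributions cancel. Note also that you have silently read the second integral as $\int_{\widetilde K_t}y\left|K_{t/g(y)}\right|dy$, which is the intended (and the only correct) reading: the $\widetilde K$ appearing there in the printed statement is a typo, as confirmed by the way the lemma is applied in the proof of Theorem \ref{thm: RSPolar} and by the constant $\frac{1}{|L^\circ|}\int_{L^\circ}y\,dy$ in the paper's own computation.
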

\begin{proof}
Notice that by the definition of $K_t$ we have that for every $t\in (0,1]$
\begin{eqnarray*}
K_t:&=&\{x\in\R^{n}: f(x)\geq t\}=\{x\in\R^{n}: e^{-h_{K}^p(x)}\geq t\}\cr
&=&\{x\in\R^{n}: h_K(x)\leq (-\log t)^\frac{1}{p}\}=\{x\in\R^{n}: \Vert x\Vert_{K^\circ}\leq (-\log t)^\frac{1}{p}\}\cr
&=&(-\log t)^\frac{1}{p}K^\circ.
\end{eqnarray*}
This also gives the identity for $\widetilde{K}_t$. Then for every $x\in K_t$
\begin{eqnarray*}
\left|\widetilde{K}_{\frac{t}{f(x)}}\right|&=&\left|\widetilde{K}_{te^{\Vert x\Vert_{K^\circ}^p}}\right|=\left|(-\log t-\Vert x\Vert_{K^\circ}^p)^\frac{1}{p}L^\circ\right|\cr
&=&(-\log t-\Vert x\Vert_{K^\circ}^p)^\frac{n}{p}\left|L^\circ\right|.
\end{eqnarray*}
Thus
\begin{eqnarray*}
|\widetilde{L}_t|&=&\int_{K_t}\left|\widetilde{K}_{\frac{t}{f(x)}}\right|dx\cr
&=&|L^\circ|\int_{(-\log t)^\frac{1}{p}K^\circ}(-\log t-\Vert x\Vert_{K^\circ}^p)^\frac{n}{p}dx\cr
&=&(-\log t)^\frac{2n}{p}|L^\circ|\int_{K^\circ}(1-\Vert y\Vert_{K^\circ}^p)^\frac{n}{p}dx\cr
&=&(-\log t)^\frac{2n}{p}|L^\circ|\int_{K^\circ} \int_{\Vert y\Vert_{K^\circ}^p}^1\frac{n}{p}(1-s)^{\frac{n}{p}-1}dsdy\cr
&=&\frac{n}{p}(-\log t)^\frac{2n}{p}|L^\circ|\int_0^1(1-s)^{\frac{n}{p}-1}\int_{s^\frac{1}{p}K^\circ}dyds\cr
&=&\frac{n}{p}(-\log t)^\frac{2n}{p}|K^\circ||L^\circ|\int_0^1s^\frac{n}{p}(1-s)^{\frac{n}{p}-1}ds\cr
\end{eqnarray*}
and
\begin{eqnarray*}
\int_{K_t}x\left|\widetilde{K}_{\frac{t}{f(x)}}\right|dx&=&|L^\circ|\int_{(-\log t)^\frac{1}{p}K^\circ}x(-\log t-\Vert x\Vert_{K^\circ}^p)^\frac{n}{p}dx\cr
&=&(-\log t)^\frac{2n+1}{p}|L^\circ|\int_{K^\circ} y(1-\Vert y\Vert_{K^\circ}^p)^\frac{n}{p}dy\cr
&=&(-\log t)^\frac{2n+1}{p}|L^\circ|\int_{K^\circ} \int_{\Vert y\Vert_{K^\circ}^p}^1y\frac{n}{p}(1-s)^{\frac{n}{p}-1}dsdy\cr
&=&\frac{n}{p}(-\log t)^\frac{2n+1}{p}|L^\circ|\int_0^1(1-s)^{\frac{n}{p}-1}\int_{s^\frac{1}{p}K^\circ}ydyds\cr
&=&\frac{n}{p}(-\log t)^\frac{2n+1}{p}|L^\circ|\int_0^1s^\frac{n+1}{p}(1-s)^{\frac{n}{p}-1}ds\int_{K^\circ}ydy.\cr
\end{eqnarray*}
Consequently,
$$
\frac{1}{|\widetilde{L}_t|}\int_{K_t}x\left|\widetilde{K}_{\frac{t}{f(x)}}\right|dx=(-\log t)^\frac{1}{p}\frac{\beta\left(1+\frac{n+1}{p},\frac{n}{p}\right)}{\beta\left(1+\frac{n}{p},\frac{n}{p}\right)}\frac{1}{|K^\circ|}\int_{K^\circ}ydy,
$$
and, similarly
$$
\frac{1}{|\widetilde{L}_t|}\int_{\widetilde{K}_t}y\left|\widetilde{K}_{\frac{t}{g(y)}}\right|dy=(-\log t)^\frac{1}{p}\frac{\beta\left(1+\frac{n+1}{p},\frac{n}{p}\right)}{\beta\left(1+\frac{n}{p},\frac{n}{p}\right)}\frac{1}{|L^\circ|}\int_{L^\circ}ydy.
$$
Since $K^\circ$ and $L^\circ$ have opposite barycenter we obtain the result.
\end{proof}

\begin{lemma}\label{lem:Operations}
Let $p\geq 1$ and let $K,L\subseteq\R^n$ be convex bodies such that $0\in\textrm{int}K\cap\textrm{int}L$. Let $f(x)=e^{-h_{K}^p(x)}$, $g(y)=e^{-h_{L}^p(y)}$. Then for every $x\in\R^n$
\begin{itemize}
\item$f\star g(x)= e^{-h_{(K\cap_p L)}^p(x)}$,
\item$f(x)g(-x)= e^{-h_{K+_p(-L)}^p(x)}.$
\end{itemize}
\begin{proof}
Both identities follow from the definitions. On the one hand, from the definition of the Asplund product
\begin{eqnarray*}
\sup_{x_1+x_2=x}f(x_1)g(x_2)&=&\sup_{x_1+x_2=x}e^{-h_{K}^p(x_1)-h_L^p(x_2)}= e^{-\left(\inf_{x_1+x_2=x}h_{K}^p(x_1)+h_{K}^p(x_2)\right)}\cr
&=& e^{-h_{(K\cap_p L)}^p(x)}.\cr
\end{eqnarray*}
On the other hand, $$h_{K+_p(-L)}^p(x)=h_K^p(x)+h_{-L}^p(x)=h_K^p(x)+h_{L}^p(-x).$$
\end{proof}
\end{lemma}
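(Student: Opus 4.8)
The plan is to derive both identities purely formally from the definitions of the Asplund product, the support function, and the $\ell_p$-operations; no genuine inequality or optimization beyond the infimum already present in the definition of $K\cap_p L$ is involved.

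For the first identity I would begin with the definition of the Asplund product, writing
$$
f\star g(x)=\sup_{x_1+x_2=x}f(x_1)g(x_2)=\sup_{x_1+x_2=x}e^{-\left(h_K^p(x_1)+h_L^p(x_2)\right)}.
$$
Since $t\mapsto e^{-t}$ is decreasing, the right-hand side equals $e^{-\inf_{x_1+x_2=x}\left(h_K^p(x_1)+h_L^p(x_2)\right)}$; and since $t\mapsto t^{1/p}$ is increasing, the exponent is precisely $h_{K\cap_p L}^p(x)$, where by definition $h_{K\cap_p L}(x)=\inf_{x_1+x_2=x}\left(h_K^p(x_1)+h_L^p(x_2)\right)^{1/p}$. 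Hence $f\star g(x)=e^{-h_{K\cap_p L}^p(x)}$.

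For the second identity I would use the elementary fact that $h_{-L}(x)=\max_{z\in -L}\langle x,z\rangle=\max_{z\in L}\langle x,-z\rangle=h_L(-x)$ for every $x\in\R^n$. Consequently
$$
f(x)g(-x)=e^{-h_K^p(x)}e^{-h_L^p(-x)}=e^{-\left(h_K^p(x)+h_{-L}^p(x)\right)}=e^{-h_{K+_p(-L)}^p(x)},
$$
the last step being the definition $h_{K+_p(-L)}(x)=\left(h_K^p(x)+h_{-L}^p(x)\right)^{1/p}$.

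I do not expect a real obstacle: both parts are formal. The only two points that deserve an explicit line are the passage of $\sup$ to $-\inf$ through the exponential together with the monotonicity of $t\mapsto t^{1/p}$ in the first part, and the identity $h_{-L}=h_L(-\,\cdot\,)$ in the second. The hypothesis $0\in\mathrm{int}\,K\cap\mathrm{int}\,L$ only serves to keep $h_K,h_L$ finite and the objects well defined; it plays no other role in this lemma.
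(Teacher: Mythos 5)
Your proof is correct and follows essentially the same route as the paper: the first identity by pushing the supremum in the Asplund product through the decreasing exponential to obtain $\inf_{x_1+x_2=x}\left(h_K^p(x_1)+h_L^p(x_2)\right)=h_{K\cap_p L}^p(x)$, and the second via $h_{-L}(x)=h_L(-x)$ together with the definition of $h_{K+_p(-L)}$. Your version is in fact slightly more explicit about the monotonicity steps, which is fine.
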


\section{Reverse functional Rogers-Shephard inequality}\label{sec:MilmanPajorFunctionsGeneral}

In this section we will prove Theorem \ref{thm:MilmanPajorLogConcaveProduct}.

\begin{proof}[Proof of Theorem \ref{thm:MilmanPajorLogConcaveProduct}]
Let $F:\R^{2n}\to\R$ be the log-concave function $F(u,v)=f\left(\frac{u+v}{\sqrt{2}}\right)g\left(\frac{v-u}{\sqrt{2}}\right)$ and let $L\subseteq\R^{2n+1}$ be the convex set
$$
L=\{(u,v,t)\in\R^{2n+1}:F(u,v)\geq e^{-t}\Vert f\Vert_\infty\Vert g\Vert_\infty\}.
$$
We will call $H=\textrm{span}\{e_{n+1},\dots,e_{2n}, e_{2n+1}\}$ and let $M=P_{H}(L)$. Taking into account that for any $u,v\in\R^n$ $\frac{u+v}{\sqrt{2}}+\frac{v-u}{\sqrt{2}}=\sqrt{2}v$ we have that
\begin{eqnarray*}
M&=&\{(v,t)\in\R^{n+1}:\sup_{u\in\R^n}F(u,v)\geq e^{-t}\Vert f\Vert_\infty\Vert g\Vert_\infty\}\cr
&=&\{(v,t)\in\R^{n+1}:f\star g(\sqrt{2}v)\geq e^{-t}\Vert f\Vert_\infty\Vert g\Vert_\infty\}.\cr
\end{eqnarray*}
Notice that, by Brunn-Minkowski inequality, the function $\psi(v,t)$ supported on $M$ and defined by
\begin{eqnarray*}
\psi(v,t)&=&|L\cap((0,v,t)+H^\perp)|\cr
&=&|\{u\in\R^n:F(u,v)\geq e^{-t}\Vert f\Vert_\infty\Vert g\Vert_\infty\}|
\end{eqnarray*}
is log-concave. Besides, calling $\mu$ the probability measure on $M$ defined by
$$
d\mu(v,t)=\frac{\chi_M(v,t)e^{-t}dtdv}{\int_M e^{-t}dtdv}
$$
we have, changing variables $x=\frac{u+v}{\sqrt{2}}$ and $y=\frac{u-v}{\sqrt{2}}$, that
\begin{eqnarray*}
\int_{M}\psi(v,t)d\mu(v,t)&=&\int_{M}e^{-t}\psi(v,t)dtdv=\frac{\int_L e^{-t}dtdvdu}{\int_M e^{-t}dtdv}\cr
&=&\frac{\int_0^\infty e^{-t}\left|\left\{(u,v)\in\R^{2n}:f\left(\frac{u+v}{\sqrt{2}}\right)g\left(\frac{v-u}{\sqrt{2}}\right)\geq e^{-t}\Vert f\Vert_\infty \vert g\Vert_\infty\right\}\right|}{{\int_M e^{-t}dtdv}}\cr
&=&\frac{\int_{\R^{2n}}f\left(\frac{u+v}{\sqrt{2}}\right)g\left(\frac{v-u}{\sqrt{2}}\right)dvdu}{\int_{\R^n}f\star g(\sqrt{2}z)dz}\cr
&=&\frac{(\sqrt{2})^n\int_{\R^{n}}f(x)dx\int_{\R^n}g(y)dy}{\int_{\R^n}f\star g(z)dz}.
\end{eqnarray*}
Taking into account that $f$ and $g$ have opposite barycenter, changing again variables $x=\frac{u+v}{\sqrt{2}}$ and $y=\frac{u-v}{\sqrt{2}}$ we have
\begin{eqnarray*}
&&\frac{\int_M(v,t)\psi(v,t)d\mu(v,t)}{\int_M\psi(v,t)d\mu(v,t)}=\frac{\int_M(v,t)e^{-t}\psi(v,t)dtdv}{\int_M e^{-t}\psi(v,t)dvdt}\cr
&=&\frac{\int_L(v,t)e^{-t}dtdudv}{\int_L e^{-t}dtdudv}=\frac{\int_0^\infty \int_{\left\{(u,v)\in\R^{2n}:f\left(\frac{u+v}{\sqrt{2}}\right)g\left(\frac{v-u}{\sqrt{2}}\right)\geq e^{-t}\Vert f\Vert_\infty \vert g\Vert_\infty\right\}}(v,t)e^{-t}dtdudv}{\int_{\R^n} \frac{f(x)}{\Vert f\Vert_\infty}dx\int_{\R^n} \frac{g(y)}{\Vert g\Vert_\infty}dy}\cr
&=&\left(0,1+\textrm{Ent}\left(\frac{f}{\Vert f\Vert_\infty}\right)+\textrm{Ent}\left(\frac{g}{\Vert g\Vert_\infty}\right)\right).
\end{eqnarray*}
Consequently, by Lemma \ref{lem:Log-concaveAndProbability} we have that
$$
\frac{(\sqrt{2})^n\int_{\R^{n}}f(x)dx\int_{\R^n}g(y)dy}{\int_{\R^n}f\star g(z)dz}
$$
is bounded above by the volume of
$$
\left\{u\in\R^n: f\left(\frac{u}{\sqrt2}\right)g\left(\frac{-u}{\sqrt2}\right)\geq e^{-\left(1+\textrm{Ent}\left(\frac{f}{\Vert f\Vert_\infty}\right)+\textrm{Ent}\left(\frac{g}{\Vert g\Vert_\infty}\right)\right)}\Vert f\Vert_\infty\Vert g\Vert_\infty\right\}.
$$
Since $\Vert f\Vert_\infty=f(0)$ and $\Vert g\Vert_\infty=g(0)$, we have, by Lemma \ref{K_fAndLevelSets}, that the latter set is contained in
$$
e^{\frac{1}{n}\left(1+\textrm{Ent}\left(\frac{f}{\Vert f\Vert_\infty}\right)+\textrm{Ent}\left(\frac{g}{\Vert g\Vert_\infty}\right)\right)}K_{f g_{-}\left(\frac{\cdot}{\sqrt2}\right)},
$$
where $g_{-}(x)=g(-x)$. Taking volumes and using Lemma \ref{VolumeAndIntegral} we obtain
\begin{eqnarray*}
\frac{(\sqrt{2})^n\int_{\R^{n}}f(x)dx\int_{\R^n}g(y)dy}{\int_{\R^n}f\star g(z)dz}\leq e^{\left(1+\textrm{Ent}\left(\frac{f}{\Vert f\Vert_\infty}\right)+\textrm{Ent}\left(\frac{g}{\Vert g\Vert_\infty}\right)\right)}\int_{\R^n}\frac{f\left(\frac{u}{\sqrt2}\right)g\left(\frac{-u}{\sqrt2}\right)}{\Vert f\Vert_\infty\Vert g\Vert_\infty}du.
\end{eqnarray*}
Thus,
\begin{eqnarray*}
&&\Vert f\Vert_\infty\Vert g\Vert_\infty\int_{\R^{n}}f(x)dx\int_{\R^n}g(y)dy\leq e^{\left(1+\textrm{Ent}\left(\frac{f}{\Vert f\Vert_\infty}\right)+\textrm{Ent}\left(\frac{g}{\Vert g\Vert_\infty}\right)\right)}\cr
&\times&\int_{\R^n}f\star g(z)dz\int_{\R^n}f\left(u\right)g(-u)du,\cr
\end{eqnarray*}
which completes the proof.
\end{proof}

\section{Volume estimates for polars of $\ell_p$-differences of convex bodies}\label{sec:ResultsEllpSums}

In this section we will prove Theorems \ref{thm: RSPolar} and \ref{thm: RSPolarReverse}. The proof of Theorem \ref{thm: RSPolar} follows the same lines as the one in the previous section  for general log-concave functions. The main difference lies in the fact that we will consider functions with homothetic level sets and then we will have that not only our functions will be centered but every level set will be centered. This will allow us to work with every level set separately instead of with the whole epigraph.
\begin{proof}[Proof of Theorem \ref{thm: RSPolar}]
Let us consider the log-concave functions $f:\R^n\to[0,\infty)$ given by $f(x)=e^{-h_{K}^p(x)}$, $g(y)={e^{-h_L^p(y)}}$. Notice that $\Vert f\Vert_\infty=f(0)=1$, $\Vert g\Vert_\infty=g(0)=1$ and that, by Lemma \ref{lem:VolumeConvexBody},  $\Vert f\Vert_1=\Gamma\left(1+\frac{n}{p}\right)|K^\circ|$ and $\Vert g\Vert_1=\Gamma\left(1+\frac{n}{p}\right)|L^\circ|.$ Let $F:\R^{2n}\to\R$ be the function given by
$$
F(u,v)=f\left(\frac{u+v}{\sqrt{2}}\right)g\left(\frac{v-u}{\sqrt{2}}\right).
$$
Observe that $\Vert F\Vert_\infty=F(0,0)=1$ and that, changing variables $x=\frac{u+v}{\sqrt2}$, $y=\frac{u-v}{\sqrt2}$
\begin{eqnarray*}
\int_{\R^{2n}}F(u,v)dvdu&=&\int_{\R^{2n}}f\left(\frac{u+v}{\sqrt{2}}\right)g\left(\frac{v-u}{\sqrt{2}}\right)dvdu=\int_{\R^{2n}}f(x)g(-y)dydx\cr
&=&\Gamma\left(1+\frac{n}{p}\right)^2|K^\circ||L^\circ|.
\end{eqnarray*}
Let us call, for every $t\in(0,1]$,
$$
L_t:=\{(u,v)\in\R^{2n}:F(u,v)\geq t\}
$$
and let $M_t$ be the projection of $L_t$ onto the subspace $H=\textrm{span}\{e_{n+1},\dots, e_{2n}\}$, which we identify with $\R^n$. Thus,
$$
M_t=\{v\in\R^n:\max_{u\in\R^n}F(u,v)\geq t\}.
$$
Then, the function $\psi_t(v):=|L_t\cap(0,v)+H^\perp|$ is log-concave and has support $M_t$. Calling $\mu_t$ the uniform probability measure on $M_t$ we have that
\begin{eqnarray*}
\int_{M_t}\psi_t(v)d\mu_t(v)=\frac{1}{|M_t|}\int_{M_t}\psi_t(v)dv=\frac{|L_t|}{|M_t|},
\end{eqnarray*}
and, changing variables $x=\frac{u+v}{\sqrt2}$, $y=\frac{u-v}{\sqrt2}$,
\begin{eqnarray*}
\int_{M_t}v\frac{\psi_t(v)}{\int_{M_t}\psi(w)d\mu_t(w)}d\mu_t(v)&=&\frac{1}{|L_t|}\int_{M_t}v\psi_t(v)dv=\frac{1}{|L_t|}\int_{L_t}vdv\cr
&=&\frac{1}{|L_t|}\int_{\widetilde{L}_t}\frac{x-y}{\sqrt2}dydx,\cr
\end{eqnarray*}
where $\widetilde{L}_t$ is the level set $\widetilde{L}_t:=\{(x,y)\in\R^{2n}:f(x)g(-y)\geq t\}.$
Since, by Lemma \ref{lem:CenteredLevelSets},
\begin{eqnarray*}
\int_{\widetilde{L}_t}(x-y)dydx&=&\int_{\widetilde{L}_t}xdydx-\int_{\widetilde{L}_t}ydydx\cr
&=&\int_{K_t}x|\widetilde{K}_{\frac{t}{f(x)}}|dx-\int_{-\widetilde{K}_t}y|K_{\frac{t}{g(-y)}}|\cr
&=&\int_{K_t}x|\widetilde{K}_{\frac{t}{f(x)}}|dx+\int_{\widetilde{K}_t}y|K_{\frac{t}{g(y)}}|\cr
&=&0,
\end{eqnarray*}
we have that for every $t\in(0,1]$
$$
\int_{M_t}v\frac{\psi_t(v)}{\int_{M_t}\psi(w)d\mu_t(w)}d\mu_t(v)=0.
$$
Consequently, by Lemma \ref{lem:Log-concaveAndProbability}, for every $t\in(0,1]$
$$
\frac{|L_t|}{|M_t|}\leq\psi_t(0)=|L_t\cap H^{\perp}|=\left|\left\{u\in\R^n:f\left(\frac{u}{\sqrt2}\right)g\left(-\frac{u}{\sqrt2}\right)\geq t\right\}\right|.
$$
Equivalently, the volume of $L_t$ is bounded above by
$$
\left|\left\{v\in\R^n:\max_{u\in\R^n}f\left(\frac{u+v}{\sqrt{2}}\right)g\left(\frac{v-u}{\sqrt{2}}\right)\geq t\right\}\right|\left|\left\{u\in\R^n:f\left(\frac{u}{\sqrt2}\right)g\left(-\frac{u}{\sqrt2}\right)\geq t\right\}\right|.
$$
Integrating in $t\in(0,1]$ and using Fubini's theorem we obtain that
\begin{align*}
&\Gamma\left(1+\frac{n}{p}\right)^2|K^\circ||L^\circ|=\int_{\R^{2n}}F(u,v)dvdu\cr
&\leq\int_{\R^{2n}}\min\left\{\max_{\bar{u}\in\R^n}f\left(\frac{\bar{u}+v}{\sqrt{2}}\right)g\left(\frac{v-\bar{u}}{\sqrt{2}}\right),f\left(\frac{u}{\sqrt2}\right)g\left(-\frac{u}{\sqrt2}\right)\right\}dudv.\cr
\end{align*}
Notice that for every $\bar{u},v\in\R^n$ $\frac{\bar{u}+v}{\sqrt{2}}+\frac{v-\bar{u}}{\sqrt{2}}=\sqrt{2}v$ and then for every $v\in\R^{2n}$
$$
\max_{\bar{u}\in\R^n}f\left(\frac{\bar{u}+v}{\sqrt{2}}\right)g\left(\frac{v-\bar{u}}{\sqrt{2}}\right)=\max_{x_1+x_2=\sqrt{2}v}f(x_1)g(x_2)=f\star g(\sqrt{2}v).
$$
Thus, by Lemma \ref{lem:Operations} and Lemma \ref{lem:VolumeConvexBody}
\begin{eqnarray*}
\Gamma\left(1+\frac{n}{p}\right)^2|K^\circ||L^\circ|&\leq&\int_{\R^{2n}}\min\{f\star g(v), f(u)g(-u)\}dvdu\cr
&=&\int_{\R^{2n}}e^{-\max\left\{\Vert u\Vert_{(K+_p(-L))^\circ}^p, \Vert v\Vert_{(K\cap_p L)^\circ}^p\right\}}dvdu\cr
&=&\int_{\R^{2n}}e^{\Vert (u,v)\Vert^p_{((K\cap_p L))^\circ\times (K+_p(-L))^\circ}}dvdu\cr
&=&\Gamma\left(1+\frac{2n}{p}\right)|(K\cap_p L)^{\circ}||(K+_p(-L))^\circ|,
\end{eqnarray*}
which finishes the proof.
\end{proof}
The proof of Theorem \ref{thm: RSPolarReverse} follows the same idea, but now we use the inequality, due to Rogers and Shephard,
$$
|L_t|\geq{2n\choose n}^{-1}|P_{H}L_t||L_t\cap H^\perp|=|M_t||L_t\cap H^\perp|.
$$

\end{document}